\newtheorem{thm}{Theorem}
\newtheorem{lem}[thm]{Lemma}
\newtheorem{conj}{Conjecture}
\newtheorem{rem}[thm]{Remark}
\DeclareMathOperator{\asc}{asc}
\DeclareMathOperator{\des}{des}
\DeclareMathOperator{\rmax}{rmax}
\DeclareMathOperator{\lmax}{lmax}
\DeclareMathOperator{\rmin}{rmin}
\DeclareMathOperator{\lmin}{lmin}
\DeclareMathOperator{\stat}{stat}
\title{Distributions of statistics on separable permutations}
\author{Joanna N. Chen\footnote{College of Science, Tianjin University of Technology, Tianjin 300384, P. R. China.  {\bf Email:} joannachen@tjut.edu.cn}\ , Sergey Kitaev\footnote{Department of Mathematics and Statistics, University of Strathclyde, 26 Richmond Street, Glasgow G1 1XH, United Kingdom. 
{\bf Email:} sergey.kitaev@strath.ac.uk.}\ \ and Philip B. Zhang\footnote{College of Mathematical Science, Tianjin Normal University, Tianjin  300387, P. R. China.  {\bf Email:} zhang@tjnu.edu.cn.}}
\begin{document}

\maketitle 

\noindent\textbf{Abstract.} 
We derive functional equations for distributions of six classical statistics (ascents, descents, left-to-right maxima, right-to-left maxima, left-to-right minima, and right-to-left minima) on separable and irreducible separable permutations. The equations are used to find a third degree equation for joint distribution of ascents and descents on separable permutations that generalizes the respective known result for the descent distribution. Moreover, our general functional equations allow us to derive explicitly (joint) distribution of any subset of maxima and minima statistics on irreducible, reducible and all separable permutations. In particular, there are two equivalence classes of distributions of a pair of maxima or minima statistics. Finally, we  present three unimodality conjectures about distributions of statistics on separable permutations.  \\

\noindent {\bf AMS Classification 2010:} 05A15

\noindent {\bf Keywords:}  separable permutation, irreducible permutation, permutation statistic, distribution 

\section{Introduction}\label{intro-sec}

A permutation of length $n$ is a rearrangement of the set $[n]:=\{1,2,\ldots,n\}$. Denote by  $S_n$  the set of permutations of $[n]$ and let $\varepsilon$ be the empty permutation. A {\em pattern} is a permutation. A permutation $\pi_1\pi_2\cdots\pi_n\in S_n$ avoids a pattern $p=p_1p_2\cdots p_k\in S_k$  if there is no subsequence $\pi_{i_1}\pi_{i_2}\cdots\pi_{i_k}$ such that $\pi_{i_j}<\pi_{i_m}$ if and only if $p_j<p_m$.  Let $S_n(p)$ denote the set of $p$-avoiding permutations of length $n$. For a permutation $\pi=\pi_1\pi_2\cdots\pi_n$, its {\em reverse} is the permutation $r(\pi)=\pi_n\pi_{n-1}\cdots \pi_1$ and its {\em complement} is the permutation $c(\pi)=c(\pi_1)c(\pi_2)\cdots c(\pi_n)$ where $c(x)=n+1-x$. Also, the length of a permutation $\pi$, denoted by $|\pi|$, is the number of elements in $\pi$. For example, for $\pi=423165$, $|\pi|=6$, $r(\pi)=561324$, and $c(\pi)=354612$.

For $n\geq 2$, a permutation $\pi_1\pi_2\cdots\pi_n$ is {\em irreducible} if there is no $i$, $2\leq i\leq n$, such that any element in $\pi_1\cdots\pi_{i-1}$ is less than each element in $\pi_i\cdots\pi_n$. For example, the permutation $42513$ is irreducible. By definition, the permutation 1 is irreducible. A permutation is {\em reducible} if it is not irreducible. The empty permutation $\varepsilon$ is neither reducible nor irreducible. Each reducible permutation can be cut into irreducible pieces called {\em irreducible components}, or just {\em components}. For example, the components of  the permutation 312546978 are 312, 54, 6, and 978.

\subsection{Separable permutations}
Suppose $\pi=\pi_1\pi_2\cdots\pi_m \in S_m$ and $\sigma=\sigma_1\sigma_2\cdots\sigma_n\in S_n$. We define the {\em direct sum}\index{direct sum $\oplus$} (or simply, {\em sum}) $\oplus$, and the {\em skew sum}\index{skew sum $\ominus$} $\ominus$ by building the permutations $\pi\oplus\sigma$ and $\pi\ominus\sigma$ as follows:
\begin{eqnarray*}
(\pi\oplus\sigma)_i&=& \left\{
\begin{array}{ll}
\pi_i  &  \mbox{ if }1\leq i\leq m,\\
\sigma_{i-m}+m & \mbox{ if }m+1\leq i\leq m+n,
\end{array}\right.\nonumber \\
(\pi\ominus\sigma)_i &=& \left\{
\begin{array}{ll}
\pi_i+n  &  \mbox{ if }1\leq i\leq m,\\
\sigma_{i-m} & \mbox{ if }m+1\leq i\leq m+n.
\end{array}\right.\nonumber
\end{eqnarray*}
For example, $14325\oplus 4231=143259786$ and $14325\ominus 4231= 587694231$. 

The {\em separable permutations} are those which can be built from the permutation $1$ by repeatedly applying the $\oplus$ and $\ominus$ operations. Bose, Buss and Lubiw~\cite{BBL98} introduced the notion of separable permutation in 1998, and it is well-known \cite{Kitaev2011Patterns} that the set of all separable permutations of length $n\geq 1$ is precisely $S_n(2413,3142)$ (we assume here that by definition, the empty permutation $\varepsilon$ is not separable).  Separable permutations appear in the literature in various contexts (see, for example, \cite{AAV2011,AHP2015,AJ2016,FuLinZeng,GaoLiu,NRV,Stankova1994}). Cleary, the reverse, or complement, or (usual group theoretic) inverse, or any composition of these operations applied to a separable permutation gives a separable permutation. 

It is not difficult to see that any $\pi\in S_n(2413,3142)$ has the following
structure (see also Figure~\ref{sepStructure} for a schematic representation, where a permutation is viewed as a diagram with a dot in position $(i,\pi_i)$ for each element $\pi_i$ of $\pi$):
\begin{equation}\label{structureSep}\pi=L_1L_2\cdots
L_mnR_mR_{m-1}\cdots R_1\notag \end{equation} where
\begin{itemize}
\item for $1\leq i\leq m$, $L_i$ and $R_i$ are non-empty ($\neq \varepsilon$), with possible exception for
$L_1$ and $R_m$, separable permutations which are intervals in $\pi$ (that is, consist of all elements in $\{a,a+1,\ldots,b\}$ for some $a$ and $b$);
\item $L_1<R_1<L_2<R_2<\cdots <L_m<R_m$, where $A<B$, for two
permutations $A$ and $B$, means that each element of $A$ is less than
every element of $B$. In particular, $L_1$, if it is not empty,
contains 1.
\end{itemize}

\noindent
For example, if $\pi=2165743$ then $L_1=21$, $L_2=65$, $R_1=43$ and
$R_2=\varepsilon$. Note that a permutation in question is irreducible if and only if $L_1=\varepsilon$.

\begin{figure}[ht]
\begin{center}
\begin{tikzpicture}[line width=0.5pt,scale=0.24]
\coordinate (O) at (0,0);

\path (25,1)  node {$n$};
\draw [dashed] (O)--++(50,0);
\fill[black!100] (O)++(25,0) circle(1.5ex);

\draw (26,-1) rectangle (30,-3);
\path (28,-2)  node {$R_m$};
\path (36.5,-2)  node {$\boldsymbol{\leftarrow}$ possibly empty};

\draw [dashed] (0,-4)--++(50,0);
\draw [dashed] (0,-8)--++(50,0);
\draw [dashed] (0,-17)--++(50,0);
\draw [dashed] (0,-21)--++(50,0);
\draw [dashed] (0,-25)--++(50,0);

\draw [dashed] (25,0)--++(0,-29);
\draw [dashed] (31,0)--++(0,-29);
\draw [dashed] (37,0)--++(0,-29);
\draw [dashed] (43,0)--++(0,-29);

\draw [dashed] (19,0)--++(0,-29);
\draw [dashed] (13,0)--++(0,-29);
\draw [dashed] (7,0)--++(0,-29);

\draw (20,-5) rectangle (24,-7);
\path (22,-6)  node {$L_m$};

\fill[black!100] (O)++(32,-9) circle(1.0ex);
\fill[black!100] (O)++(33,-10) circle(1.0ex);
\fill[black!100] (O)++(34,-11) circle(1.0ex);
\draw (38,-14) rectangle (42,-16);
\path (40,-15)  node {$R_2$};

\draw (45,-22) rectangle (49,-24);
\path (47,-23)  node {$R_1$};

\fill[black!100] (O)++(17,-11) circle(1.0ex);
\fill[black!100] (O)++(16,-12) circle(1.0ex);
\fill[black!100] (O)++(15,-13) circle(1.0ex);
\draw (8,-18) rectangle (12,-20);
\path (10,-19)  node {$L_2$};

\draw (2,-26) rectangle (6,-28);
\path (4,-27)  node {$L_1$};
\path (12.5,-27)  node {$\boldsymbol{\leftarrow}$ possibly empty};

\end{tikzpicture}
\caption{A schematic view of the permutation diagrams corresponding to separable permutations. Each $L_i$ and $R_j$ is a separable permutation.}\label{sepStructure}
\end{center}
\end{figure}
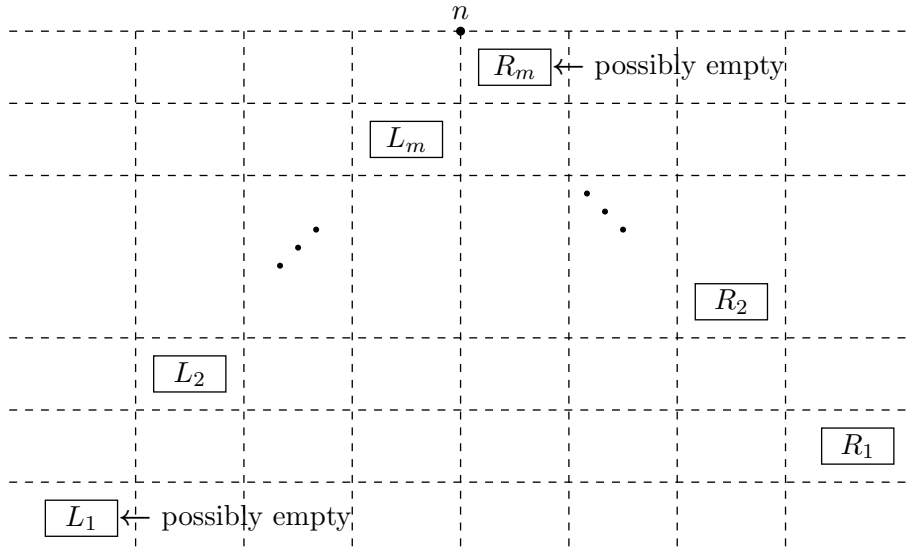

It was conjectured by Shapiro and Getu and, for the first time, proved by West~\cite{West1995}
that $S_n(3142,2413)$ is counted by the $(n-1)$-th {\em Schr\"oder number} (this is the sequence \cite[A006318]{oeis} beginning with 1, 2, 6, 22, 90, 394, 1806,...). The proof involves studying the
generating tree for the restricted permutations and it uses a well-known relation between the Schr\"oder numbers $s_n$ and the Catalan
numbers $C_n$:
\begin{equation}\label{shr-numbers}s_n=\sum_{i=0}^{n}{2n-i \choose
i}C_{n-i}.\end{equation}
Several other proofs of the enumeration formula for $S_n(3142,2413)$ appear in the literature (see \cite[Sec 2.2.5]{Kitaev2011Patterns}), and an alternative to formula (\ref{shr-numbers}) was used in~\cite{Reif2003}:
\begin{equation}\label{shr-new}s_n=\sum_{k=0}^{n}2^kC_{n,k},\end{equation}
where $C_{n,k}$ is the number of Dyck paths of length $2n$ with $k$
peaks. The generating function (g.f.) for separable permutations is
\begin{equation}\label{gf-sep-perms}
S(t):=\sum_{n\geq 1}|S_n(3142,2413)|t^n=\frac{1-t-\sqrt{1-6t+t^2}}{2}.
\end{equation}
In our studies, we also need the g.f. $I(t)$ for irreducible separable permutations. Note that for $n\geq 2$, the number of irreducible separable permutations in $S_n$ is the same as the number of reducible separable permutations, which can be seen by applying the reverse operation to all separable permutations (sending reducible permutations to irreducible, and vice versa). Hence, 
\begin{equation}\label{irr-sep-perms-gf}
I(t)=t+\frac{1}{2}(S(t)-t)=\frac{1+t-\sqrt{1-6t+t^2}}{4}.
\end{equation}
The sequence of irreducible separable permutations begins with 1, 1, 3, 11, 45, 197, 903,... and these are the little Schr\"{o}der numbers \cite[A001003]{oeis}. 

\subsection{Distributions of permutation statistics}
For a permutation $\pi=\pi_1 \pi_2 \cdots \pi_n$,  the {\em descent} (resp., {\em ascent}) {\em statistic} on $\pi$, $\des(\pi)$ (resp., $\asc(\pi)$), is defined as the number of $i\in \{1,2,\ldots,n-1\}$ such that $\pi_i > \pi_{i+1}$ (resp., $\pi_i<\pi_{i+1}$). For example, des$(561423)=2$ and asc$(25134)=3$. The distribution of descents (or ascents) over $S_n$ is given by the  {\em Eulerian polynomial} 
\begin{align*}
A_n(q) := \sum_{\pi \in S_n}q^{\des (\pi)}=\sum_{k=1}^{n}k!S(n,k)(q-1)^{n-k}
\end{align*}
where $S(n,k)$ is the {\em Stirling number of the second kind}. 

The distribution of descents has been studied over various subsets of permutations in the literature. For example, the distribution of descents over $S_n(231)$ is given by the  {\em Narayana numbers}  $$\frac{1}{n}\binom{n}{k}\binom{n}{k+1}$$ (see  \cite{Stanley1989}). Also, the distribution of descents over 123-avoiding permutations is given by the following formula \cite{BBS2010,BKLPRW}, where $t$ and $q$ correspond to the length and the number of descents:
\begin{align}
\frac{-1+2tq+2t^2q-2tq^2-4t^2q^2+2t^2q^3+\sqrt{1-4tq-4t^2q+4t^2q^2}}{2tq^2(tq-1-t)}. \notag
\end{align}
The number of  321-avoiding permutations of length $n$ with $k$ descents \cite[A091156]{oeis} is given by
$$\frac{1}{n+1}{n+1\choose k}\sum_{j=0}^{n-2k}{k+j-1\choose k-1}{n+1-k \choose n-2k-j}.$$ 
More relevant to our paper is the result in \cite[Cor 2.4]{FuLinZeng} stating that the g.f. $S(t,q)$ giving the distribution of descents (equivalently, ascents, because of the reverse operation) on separable permutations satisfies
\begin{equation}\label{des-separable-distr}qS^3(t,q)+qtS^2(t,q)+((1+q)t-1)S(t,q)+t=0.
\end{equation}

For a permutation $\pi=\pi_1\pi_2\cdots \pi_n$, $\pi_i$ is a {\em right-to-left maximum} (resp., {\em right-to-left minimum}) if $\pi_i>\pi_j$ (resp., $\pi_i<\pi_j$) for all $j> i$. In particular, $\pi_n$ is a right-to-left maximum and a right-to-left minimum. The number of right-to-left maxima (resp., right-to-left minima) is denoted by  $\rmax(\pi)$ (resp.,  $\rmin(\pi)$). For example, $\rmax(426513)=3$ and $\rmin(426153)=2$.  Also,  $\pi_i$ is a {\em left-to-right maximum} (resp., {\em left-to-right minimum}), denoted by  $\lmax(\pi)$ (resp.,  $\lmin(\pi)$)  if $\pi_i>\pi_j$ (resp., $\pi_i<\pi_j$) for all $j< i$. In particular, $\pi_1$ is a left-to-right maximum and a left-to-right minimum. For example, $\lmax(425163)=\lmin(426153)=3$.

Note that the distribution of $\rmax$, which is the same as the distribution of $\rmin$, or $\lmax$, or $\lmin$, or cycles in (unrestricted) permutations, is given by the {\em signless Stirling numbers of the first kind}. It is easy to see, and is well-known that
$$\sum_{\pi\in S_n}y^{\rmax(\pi)}=y(y+1)\cdots (y+n-1)=y^{(n)}$$
is given by the {\em rising factorial}.

Two $k$-tuples of (permutation) statistics $(s_1, s_2, \ldots , s_k)$ and $(s_1^\prime, s_2^\prime, \ldots , s_k^\prime)$ are
{\em equidistributed} over a set $S$ if
$$\sum_{a\in S}t^{|a|}t_1^{s_1(a)}t_2^{s_2(a)}\cdots t_k^{s_k(a)}=
\sum_{a\in S}t^{|a|}t_1^{s_1^\prime(a)}t_2^{s_2^\prime(a)}\cdots t_k^{s_k^\prime(a)}$$
where $|a|$ is the size of $a$ (for permutations, the size is the length).

%Let $\sum_{a\in \mathcal{S}}t^{|a|}x_1^{s_1(a)}\cdots x_k^{s_k(a)}$  be a g.f. for  the distribution of the (permutation) statistics in a set $X=\{s_1, \ldots , s_k\}$ over a set of (restricted) permutations  $\mathcal{S}$. We say that $X$ on $\mathcal{S}$ is {\em equidistributed}  with another set of statistics $X^\prime=\{s_1^\prime, \ldots , s_k^\prime\}$ on $\mathcal{S}$, if there is a permutation $\sigma_1\sigma_2\cdots\sigma_k$ of variables  such that  $\sum_{a\in \mathcal{S}}t^{|a|}x_{\sigma_1}^{s^\prime_1(a)}\cdots x_{\sigma_k}^{s^\prime_k(a)}$  is a generating function for  $X^\prime$.
%
%\begin{rem} Our motivation to define and use the distribution of a set of statistics $\{s_1, \ldots , s_k\}$,  instead of commonly used the distribution of a tuple of statistics  $(s_1, \ldots , s_k)$ is in stating some of our enumerative results in a more compact form, rather than listing all possible cases explicitly. However, often the order of statistics is important in stating results in a short form, so we use tuples of statistics in these situations.  For example, in Theorem~\ref{dist-lmax-rmax-thm} we use $\{\stat_1,\stat_2\}$ for the distribution of the statistics over all separable permutations since this distribution is symmetric in variables $z_1$ and $z_2$, while we use $(\stat_1,\stat_2)$ for the distribution of the statistics over irreducible and reducible permutations because these distributions are not symmetric in variables $z_1$ and $z_2$, so the order of statistics is important. \end{rem}

In this paper, we are interested in the following g.f.
$$S(t,p,q,x,y,u,v):=\sum_{\pi}t^{|\pi|}p^{\asc(\pi)}q^{\des(\pi)}x^{\lmax(\pi)}y^{\rmax(\pi)}u^{\lmin(\pi)}v^{\rmin(\pi)}$$
where the sum is taken over all separable permutations, and $I(t,p,q,x,y,u,v)$ is the respective g.f. on irreducible separable permutations. For brevity, throughout the paper we often omit the variables in the arguments that are set to be 1. For example, $S(t,p,q,1,y,1,1)$ and $I(t,p,q,1,1,1,1)$ can be denoted by us by $S(t,p,q,y)$ and $I(t,p,q)$, respectively. This will not cause any confusion.

There are 6 unordered pairs and 4 unordered triples of statistics in $\{\lmax,\rmax, \lmin, \rmin\}$. A permutation of statistics in a $k$-tuple of statistics results in a $k$-tuple of statistics having the same distribution modulo renaming variables, and we need to consider only one pair or triple from each such equivalence class. The following lemma shows that with respect to distributions,  there is one equivalence class for a single statistic, one equivalence class for triples of statistics, and two equivalence classes for pairs of statistics.  

\begin{lem}\label{equid-lemma} The (pairs, tripples of) statistics in each of the sets 
\begin{eqnarray} 
&& \{\lmax,\rmax, \lmin, \rmin\} \label{base-set} \\
&& \{(\lmax,\rmax),(\lmin,\rmin),(\lmin,\lmax),(\rmin,\rmax)\}  \label{set2} \\
&& \{(\rmax, \lmin),(\lmax,\rmin)\} \label{set1}  \\
&&  \{(\lmax,\rmax,\lmin),(\lmin,\rmin,\lmax),(\rmin,\rmax,\lmin),\label{set-tripples} \\
&&(\rmax,\rmin,\lmax)\} \notag \end{eqnarray} are equidistributed on (separable) permutations. \end{lem}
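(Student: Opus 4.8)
The plan is to derive every claimed equidistribution from the three elementary symmetries of a permutation that preserve separability — the reverse $r$, the complement $c$, and the group-theoretic inverse — and from the orbit structure of the group they generate. First I would record the geometric meaning of the four statistics in the permutation diagram (a dot in position $(i,\pi_i)$, with larger values drawn to the north and larger positions to the east): $\lmax$, $\rmax$, $\lmin$, $\rmin$ count, respectively, the dots having no dot to their north-west, north-east, south-west, and south-east. Placing these four statistics at the corners NW, NE, SW, SE of a square, the three symmetries act as the natural reflections: the reverse $r$ (horizontal flip) swaps $\lmax\leftrightarrow\rmax$ and $\lmin\leftrightarrow\rmin$; the complement $c$ (vertical flip) swaps $\lmax\leftrightarrow\lmin$ and $\rmax\leftrightarrow\rmin$; and the inverse (reflection across the main diagonal) swaps $\lmax\leftrightarrow\rmin$ while fixing $\rmax$ and $\lmin$.

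Each of $r$, $c$, and the inverse is a length-preserving bijection of $S_n$ onto itself that restricts to a bijection on separable permutations (noted above). Hence, applying any one of them to the defining sum of $S(t,p,q,x,y,u,v)$, or of $I(t,p,q,x,y,u,v)$, shows that any tuple of statistics is equidistributed with the tuple obtained by the induced permutation of the statistics. Consequently, two unordered tuples are equidistributed, modulo renaming variables, as soon as one lies in the orbit of the other under the group $G$ generated by these three involutions acting on $\{\lmax,\rmax,\lmin,\rmin\}$.

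Next I would identify $G$ and compute its orbits on subsets of corners. The three reflections generate the full dihedral group $D_4$ of order $8$, realised here as the symmetry group of the corner-labelled square. On single corners $D_4$ acts transitively, giving the one class (\ref{base-set}); on triples — equivalently, on the single omitted corner — it is again transitive, giving the one class (\ref{set-tripples}); and on unordered pairs there are exactly two orbits, namely the four ``edges'' $\{\lmax,\rmax\}$, $\{\lmin,\rmin\}$, $\{\lmax,\lmin\}$, $\{\rmax,\rmin\}$ and the two ``diagonals'' $\{\lmax,\rmin\}$, $\{\rmax,\lmin\}$, which are precisely the classes (\ref{set2}) and (\ref{set1}). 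Since all three maps preserve both $S_n$ and the set of separable permutations, the conclusion holds in both settings simultaneously.

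The one point needing genuine care — and the main obstacle — is that $r$ and $c$ alone generate only the Klein four-group $\{e, r, c, rc\}$ of fixed-point-free double transpositions, under which the edge class (\ref{set2}) breaks into two two-element orbits $\{(\lmax,\rmax),(\lmin,\rmin)\}$ and $\{(\lmax,\lmin),(\rmax,\rmin)\}$. The inverse is exactly what fuses these into a single orbit: for instance $(\lmax,\rmax)\mapsto(\lmin,\rmin)$ under $c$, and then $(\lmin,\rmin)\mapsto(\lmax,\lmin)$ under the inverse, which fixes $\lmin$ and sends $\rmin$ to $\lmax$. At the same time the two diagonal pairs stay separate, because the inverse fixes both $\rmax$ and $\lmin$ and so cannot carry an edge onto a diagonal. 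Thus correctly pinning down the action of the inverse via the transpose of the diagram, and verifying that it merges the edges but not the diagonals, is the crux; the remaining orbit bookkeeping is routine.
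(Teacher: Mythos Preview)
Your proposal is correct and follows essentially the same approach as the paper: both proofs use the reverse, complement, and inverse bijections on (separable) permutations, and both rely on the fact that the inverse fixes $\rmax$ (and $\lmin$) while swapping $\lmax\leftrightarrow\rmin$ to fuse the two Klein-four orbits of ``edge'' pairs into the single class (\ref{set2}). Your framing via the dihedral group $D_4$ acting on the corner-labelled square is a clean systematization of the paper's more ad hoc case check, but the underlying argument is the same.
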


\begin{proof} Applying the reverse and complement operations and their composition to all (separable) permutations, we obtain equidistribution of the single and triple statistics, as well as the pairs in set \eqref{set1}. On the other hand, it is not difficult to see from the definitions that under the inverse operation, the statistic $\rmax$ goes to the statistic $\rmax$, while the statistic $\lmax$ goes to the statistic $\rmin$ proving equidistribution of $(\lmax,\rmax)$ and $(\rmin,\rmax)$. The remaining equidistributions in set \eqref{set2} are obtained by keeping track of transformations of the statistics in $(\lmax,\rmax)$ and $(\rmin,\rmax)$ under the reverse and complement operations.  \end{proof}

\begin{rem}\label{reduction-cases-rem}  By Lemma~\ref{equid-lemma}, to determine (joint) distribution of at most three statistics in the set $\{\lmax,\rmax, \lmin, \rmin\}$, it is sufficient to find the distribution of $\rmax$ and the joint distributions of the pairs of statistics $(\lmax,\rmax)$ and $(\rmax, \lmin)$ and the triple of statistics $(\lmax,\rmax,\lmin)$.
Also, by the definitions, if $\pi$ is a permutation of length at least $2$ then $\pi$ is irreducible if and only if 
\begin{itemize}
\item inverse of $\pi$ is irreducible;
\item reverse of $\pi$ is reducible;
\item complement of $\pi$ is reducible.
\end{itemize} 
\end{rem}

\subsection{Our results in this paper} Firstly, in Section~\ref{joint-all-sec} we will prove the following theorem.

\begin{thm}\label{dist-most-general-thm} The g.f. $S(t,p,q,x,y,u,v)$  and $I(t,p,q,x,y,u,v)$ satisfy functional equations (\ref{gf-all-stat-eqn}) and (\ref{gf-irr-stat-eqn}).
\end{thm}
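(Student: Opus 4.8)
The plan is to set up two mutually recursive decompositions of separable permutations---one ``horizontal'' ($\oplus$) and one ``vertical'' ($\ominus$)---and to push each of the six statistics through them. The first decomposition peels off the leading irreducible component: every separable permutation $\pi$ is either irreducible, or else $\pi=C\oplus\sigma$, where $C$ is its (irreducible) first component and $\sigma$ is an arbitrary nonempty separable permutation. This writes $S$ in terms of $I$. Dually, by Remark~\ref{reduction-cases-rem} (reverse exchanges reducible and irreducible) together with the fact that reverse turns $\oplus$ into $\ominus$, a separable permutation of length at least $2$ is irreducible if and only if it is $\ominus$-decomposable; hence every irreducible permutation of length $\ge 2$ is uniquely $\pi=D\ominus\sigma$, where $D$ is $\ominus$-indecomposable and $\sigma$ is nonempty separable. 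Moreover the $\ominus$-indecomposable separable permutations are precisely the singleton $1$ together with the reducible ones, so they are enumerated by $txyuv+(S-I)$. This second decomposition expresses $I$ through $S$ and $I$, yielding \eqref{gf-irr-stat-eqn}.

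The heart of the argument is the bookkeeping of statistics under $\oplus$ and $\ominus$. For $\pi=\alpha\oplus\beta$ one checks straight from the definitions that $\des,\lmax,\rmin$ are additive, that $\asc$ is additive with one extra ascent created at the junction, and---crucially---that $\rmax(\pi)=\rmax(\beta)$ and $\lmin(\pi)=\lmin(\alpha)$; that is, right-to-left maxima localize to the last summand and left-to-right minima to the first. The $\ominus$ case is the mirror image (the roles being swapped by reverse and complement exactly as in Lemma~\ref{equid-lemma}): now $\asc,\rmax,\lmin$ are additive, $\des$ gains one at the junction, while $\lmax(\pi)=\lmax(\alpha)$ and $\rmin(\pi)=\rmin(\beta)$ localize to the first and last summand respectively.

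Translating these rules into generating functions is then routine, the only delicate point being the localized statistics, which must be ``switched off'' in the appropriate factor by setting the corresponding variable to $1$. For the $\oplus$-decomposition, since $\rmax$ sees only $\sigma$ and $\lmin$ sees only $C$, I expect \eqref{gf-all-stat-eqn} to read
\[
S(t,p,q,x,y,u,v)=I(t,p,q,x,y,u,v)+p\,I(t,p,q,x,1,u,v)\,S(t,p,q,x,y,1,v);
\]
setting $u=1$ here and solving for $S(t,p,q,x,y,1,v)$ also gives the equivalent closed form with denominator $1-p\,I(t,p,q,x,1,1,v)$. For the $\ominus$-decomposition, using that $D$ is counted by $txyuv+(S-I)$ and that $\lmax$ sees only $D$ while $\rmin$ sees only $\sigma$, I expect
\[
I(t,p,q,x,y,u,v)=txyuv+q\big(txyu+S(t,p,q,x,y,u,1)-I(t,p,q,x,y,u,1)\big)S(t,p,q,1,y,u,v).
\]
As a sanity check, putting $p=q=x=y=u=v=1$ collapses the first identity to $S=I/(1-I)$ and, combined with the second, recovers the quadratic $2I^2-(1+t)I+t=0$ equivalent to \eqref{irr-sep-perms-gf}.

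The main obstacle is not the algebra but nailing down two structural facts precisely: (i) the equivalence ``irreducible $\Leftrightarrow$ $\ominus$-decomposable'' for separable permutations of length $\ge 2$, and the consequent identification of the $\ominus$-indecomposable separable permutations with $\{1\}\cup\{\text{reducible}\}$, which is what makes the factor $txyuv+(S-I)$ appear; and (ii) the verification that the four maxima/minima statistics localize exactly as claimed, since a single misplaced specialization would corrupt every distribution derived downstream. Both can be settled from the interval structure of the blocks $L_i,R_j$ in Figure~\ref{sepStructure} and from the behavior of reverse, complement, and inverse recorded in Lemma~\ref{equid-lemma} and Remark~\ref{reduction-cases-rem}.
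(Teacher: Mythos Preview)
Your argument is correct and follows essentially the same route as the paper: a $\ominus$-decomposition of irreducible separable permutations (your derivation of \eqref{gf-irr-stat-eqn} is word-for-word the paper's Cases~1--3, just with Cases~2 and~3 merged into the single factor $txyu+(S-I)$ for the $\ominus$-indecomposable block), together with a $\oplus$-decomposition of the reducible ones.

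The one discrepancy is that your version of \eqref{gf-all-stat-eqn} is not literally the paper's. You peel off the \emph{first} irreducible component, writing a reducible $\pi$ as $C\oplus\sigma$ with $C$ irreducible, which gives the reducible contribution $p\,I(t,p,q,x,1,u,v)\,S(t,p,q,x,y,1,v)$; the paper's Case~4 peels off the \emph{last} component, writing $\pi=L_1\oplus\pi''$ with $\pi''$ irreducible, and gets $p\,S(t,p,q,x,1,u,v)\,I(t,p,q,x,y,1,v)$. Both are valid, and their equality is exactly Theorem~\ref{useful-relation} in the paper. So after substituting your \eqref{gf-irr-stat-eqn} into $S=I+p\,I(\cdot,y{=}1)\,S(\cdot,u{=}1)$ you obtain an equation equivalent to, but not identical with, \eqref{gf-all-stat-eqn} as written; one further appeal to that swap identity is needed to land on the paper's exact form.
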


As an immediate corollary of Theorem~\ref{dist-most-general-thm}, by setting all variables but $t$ to be 1, we obtain the g.f.'s \eqref{gf-sep-perms} and~\eqref{irr-sep-perms-gf}.  As another corollary of Theorem~\ref{dist-most-general-thm} (the case of $x=y=u=v=1$) we will obtain the following theorem generalizing the known relation (\ref{des-separable-distr}) for the distribution of descents on separable permutations (substituting $p=1$ in (\ref{asc-des-relation}) gives (\ref{des-separable-distr})).

\begin{thm}\label{dist-asc-des-thm} The g.f. $S(t,p,q)$ and $I(t,p,q)$ giving the joint distribution of ascents and descents on (irreducible) separable permutations satisfy
\begin{equation}\label{asc-des-relation} p q (S(t,p,q))^3 +  p q (S(t,p,q))^2 t + S(t,p,q) ((p  + q) t - 1) + t = 0,\end{equation}
\begin{equation}\label{asc-des-irr-relation} I(t,p,q)=\frac{t+q(t+S(t,p,q))S(t,p,q)}{1+qS(t,p,q)}.\end{equation}
\end{thm}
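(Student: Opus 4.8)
The plan is to obtain both \eqref{asc-des-relation} and \eqref{asc-des-irr-relation} from two elementary structural facts and a short elimination, rather than unwinding the full machinery of Theorem~\ref{dist-most-general-thm} (although one may alternatively just specialize that result at $x=y=u=v=1$). Write $S=S(t,p,q)$ and $I=I(t,p,q)$ for brevity. The first ingredient is the unique factorization of a separable permutation into irreducible components under $\oplus$: every separable $\pi$ can be written as $\pi=\gamma_1\oplus\gamma_2\oplus\cdots\oplus\gamma_k$ with $k\ge 1$ and each $\gamma_i$ an irreducible separable permutation (irreducibility being exactly $\oplus$-indecomposability, and separability being inherited by components). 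Since in $\alpha\oplus\beta$ every entry of $\alpha$ lies below every entry of $\beta$, each junction between consecutive components is exactly one ascent and no descent, so $\asc(\pi)=\sum_i\asc(\gamma_i)+(k-1)$ and $\des(\pi)=\sum_i\des(\gamma_i)$. Summing the weight $t^{|\pi|}p^{\asc(\pi)}q^{\des(\pi)}$ over $k\ge 1$ then gives $S=\sum_{k\ge 1}p^{\,k-1}I^k=I/(1-pI)$, equivalently $I=S/(1+pS)$.

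The second ingredient is reverse symmetry. The reverse is an involution on separable permutations exchanging ascents and descents, so $S(t,p,q)=S(t,q,p)$; moreover, by Remark~\ref{reduction-cases-rem} it maps the irreducible permutations of length at least $2$ bijectively onto the reducible permutations. The only irreducible permutation escaping this bijection is the fixed point $1$, which contributes $t$ to $I$. Hence the g.f.\ for reducible separable permutations is $I(t,q,p)-t$, and since $S$ is the sum of its irreducible and reducible parts, $S=I(t,p,q)+\big(I(t,q,p)-t\big)$. Substituting the first relation in the forms $I(t,p,q)=S/(1+pS)$ and $I(t,q,p)=S/(1+qS)$ (the latter using $S(t,q,p)=S$) yields the single symmetric equation $S+t=\dfrac{S}{1+pS}+\dfrac{S}{1+qS}$.

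From here both identities drop out. Clearing denominators in the symmetric equation and cancelling is a routine computation that collapses to $pqS^3+pqtS^2+S((p+q)t-1)+t=0$, which is \eqref{asc-des-relation}; as a sanity check, setting $p=q=1$ recovers the quadratic $S^2+(t-1)S+t=0$ solving to \eqref{gf-sep-perms}. For \eqref{asc-des-irr-relation} I would not re-solve, but instead rearrange $S=I+\big(I(t,q,p)-t\big)$ directly: $I=S-\big(S/(1+qS)-t\big)=t+qS^2/(1+qS)$, and placing this over the common denominator $1+qS$ gives precisely $\big(t+q(t+S)S\big)/(1+qS)$, with no further appeal to the cubic.

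The only genuinely delicate point is the length-one exception to the reverse-bijection between irreducible and reducible permutations: overlooking that $1$ is its own reverse (so that no reducible permutation of length $1$ exists) would drop the additive $-t$ correction, and it is exactly this term that produces the constant $+t$ and the correct linear coefficient in \eqref{asc-des-relation} as well as the numerator of \eqref{asc-des-irr-relation}. The remaining verifications—that each $\oplus$-junction is one ascent and no descent, and that components of a separable permutation are again separable (immediate, as $2413$- and $3142$-avoidance is hereditary)—are straightforward.
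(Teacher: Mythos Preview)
Your proof is correct and takes a genuinely different, more symmetric route from the paper. Both start from the same first ingredient: the $\oplus$-decomposition into irreducible components gives $I=S/(1+pS)$, which is exactly the specialization \eqref{pIS=S-I} of Theorem~\ref{useful-relation}. Where you diverge is in the second equation. The paper specializes the master relations \eqref{gf-all-stat-eqn} and \eqref{gf-irr-stat-eqn} at $x=y=u=v=1$, which encodes the $\ominus$-decomposition of an irreducible permutation as $\pi'\ominus R_1$ (Cases 1--3) and yields $I=t(1+qS)+q(S-I)S$; solving this for $I$ and substituting produces the cubic. You instead observe that the reverse map swaps ascents with descents and bijects irreducible permutations of length $\ge 2$ with reducible ones, so $I(t,q,p)=S/(1+qS)$ comes for free and $S+t=I(t,p,q)+I(t,q,p)$ collapses directly to \eqref{asc-des-relation}. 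Your argument is more elementary---it never needs the $L_i/R_i$ structure of Figure~\ref{sepStructure}---and it makes the $p\leftrightarrow q$ symmetry of the cubic visible from the outset. The paper's approach, by contrast, is one instance of a uniform recipe that also handles the max/min statistics, which the symmetry shortcut does not.
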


Furthermore, letting all variables in Theorem~\ref{dist-most-general-thm}, but $t$ and $y$, be equal to~1, then replacing $y$ by $z$ and applying Remark~\ref{reduction-cases-rem}, we will justify the following theorem. 

\begin{thm}\label{dist-rmax-thm} The g.f. $S(t,z):=\sum_{\pi}t^{|\pi|}z^{\stat(\pi)}$ giving the distribution of any statistic $\stat\in \{\lmax,\rmax,\lmin,\rmin\}$ on separable permutations is 
\begin{footnotesize}
$$\frac{4\sqrt{\left(-\frac{1}{4} \sqrt{t^2-6 t+1}-tz+\frac{t}{4}+\frac{5}{4}\right)^2+\sqrt{t^2-6 t+1}-t-1}-\sqrt{t^2-6 t+1}+4t z+t - 3}{2 \left(\sqrt{t^2-6 t+1}-t-1\right)}.$$
\end{footnotesize}
The distributions of $\stat$ on irreducible and reducible separable permutations are given in Table~\ref{tab-dist-rmax-thm}. (Tables~\ref{tab-rmax-sep}, \ref{tab-rmax-irr-sep}, \ref{tab-rmin-irr-sep} give initial values for the distributions.)
\end{thm}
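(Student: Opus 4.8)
The plan is to reduce everything to a single statistic and then specialize the functional equations of Theorem~\ref{dist-most-general-thm}. By Lemma~\ref{equid-lemma} the four statistics $\lmax,\rmax,\lmin,\rmin$ are equidistributed on separable permutations, so by Remark~\ref{reduction-cases-rem} it suffices to compute the distribution of $\rmax$. I would therefore set $p=q=x=u=v=1$ and replace $y$ by $z$ in~(\ref{gf-all-stat-eqn}) and~(\ref{gf-irr-stat-eqn}), obtaining a coupled system for $S(t,z):=S(t,1,1,1,z,1,1)$ and $I(t,z)$. The two structural facts behind this specialization are the composition rules $\rmax(\alpha\oplus\beta)=\rmax(\beta)$ and $\rmax(\gamma\ominus\delta)=\rmax(\gamma)+\rmax(\delta)$, which follow directly from the definitions: in a direct sum only the rightmost summand can contain right-to-left maxima, whereas in a skew sum the contributions of the two parts simply add.

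Next I would produce two expressions for $I(t,z)$ and equate them. Decomposing each separable permutation uniquely into its irreducible components $C_1\oplus\cdots\oplus C_k$ and using $\rmax(\pi)=\rmax(C_k)$, only the last component carries the variable $z$, which gives $S(t,z)=I(t,z)/(1-\bar I)$ with $\bar I:=I(t,1)$ the known series~(\ref{irr-sep-perms-gf}); equivalently $I(t,z)=(1-\bar I)\,S(t,z)$. Decomposing instead an irreducible separable permutation as either the permutation $1$ or a skew sum of $\ominus$-indecomposable (that is, length-one or reducible) separable pieces, and using additivity of $\rmax$ under $\ominus$, yields $I(t,z)=tz+J^2/(1-J)$ with $J=tz+\bar I\,S(t,z)$. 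Writing $a:=1-\bar I$ and $b:=\bar I$ and equating the two expressions for $I$, I expect the system to collapse, after clearing denominators, to the single quadratic
\[
b\,S(t,z)^2+(tz-a)\,S(t,z)+tz=0;
\]
the cancellations here are driven by the relation $a+b=1$.

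I would then solve this quadratic, selecting the branch determined by the initial condition $S(t,z)=tz+O(t^2)$, and substitute $a=(3-t+r)/4$ and $b=(1+t-r)/4$ with $r:=\sqrt{t^2-6t+1}$. A short computation, again using $a+b=1$, shows that the discriminant satisfies $(tz-a)^2-4b\,tz=(2-a-tz)^2-4b$, and the right-hand side is precisely the nested radical appearing in the statement; this identification, together with the correct choice of sign, is the delicate point. As a consistency check one verifies that setting $z=1$ recovers~(\ref{gf-sep-perms}) and~(\ref{irr-sep-perms-gf}).

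For the remaining cases I would read off the distributions from the relations already obtained: on irreducible separable permutations it is $I(t,z)=(1-\bar I)\,S(t,z)$, and on reducible ones it is $S(t,z)-I(t,z)=\bar I\,S(t,z)$, which, combined with Remark~\ref{reduction-cases-rem}, give the entries of Table~\ref{tab-dist-rmax-thm}. The main obstacle is not conceptual but algebraic: reducing the two-variable system to the single quadratic and then matching its solution to the displayed nested-radical closed form are elementary but error-prone, and the branch of the square root must be pinned down by the condition $S(t,z)=tz+O(t^2)$.
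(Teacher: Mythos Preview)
Your approach is correct and lands on the same quadratic $bS(t,z)^2+(tz-a)S(t,z)+tz=0$ that the paper obtains as equation~(\ref{gf-rmax}) (after rearrangement, with $b=I(t)$ and $a=1-I(t)$). The route differs slightly: the paper reaches~(\ref{gf-rmax}) by directly specializing~(\ref{gf-all-stat-eqn}) and invoking~(\ref{useful-formula}), whereas you re-derive the same content combinatorially, producing two expressions for $I(t,z)$ --- one from the $\oplus$-decomposition into irreducible components (this is precisely~(\ref{I(t)S=S-I})), the other from the $\ominus$-decomposition of an irreducible permutation into $\ominus$-indecomposable pieces --- and then eliminate $I(t,z)$. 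Your $\ominus$-decomposition step is not literally the specialization of~(\ref{gf-irr-stat-eqn}); it is a cleaner variant that bypasses~(\ref{irr-gf-rmax}) altogether and goes straight to the quadratic. Both routes are equally short.

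One point to tighten: your formulas for the irreducible and reducible distributions, $I(t,z)=(1-\bar I)S(t,z)=S(t,z)/(1+S(t))$ and $\bar I\,S(t,z)=S(t)S(t,z)/(1+S(t))$, are written in terms of the univariate $S(t)$, while Table~\ref{tab-dist-rmax-thm} records them purely in terms of the bivariate $S(t,z)$, namely $S^2(t,z)/(1+S(t,z))+zt$ and $S(t,z)/(1+S(t,z))-zt$. These are equivalent, but the equivalence is exactly the quadratic relation you derived (rewritten as $S(t,z)(1-S(t)S(t,z))=tz(1+S(t))(1+S(t,z))$), so you should state that identification explicitly rather than leaving ``give the entries of Table~\ref{tab-dist-rmax-thm}'' as an assertion.
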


\begin{table}
\begin{center}
\begin{tabular}{c|c|c}
g.f. & irreducible & reducible \\
\hline
& & \\[-3mm]
$\frac{S^2(t,z)}{1+S(t,z)}+zt$ & $\stat\in\{\rmax,\lmin\}$ & $\stat\in\{\lmax,\rmin\}$ \\[3mm]
\hline
& & \\[-3mm]
$\frac{S(t,z)}{1+S(t,z)}-zt$ & $\stat\in\{\lmax,\rmin\}$ & $\stat\in\{\rmax,\lmin\}$ 
\end{tabular}
\caption{Distributions of statistics in Theorem~\ref{dist-rmax-thm}}\label{tab-dist-rmax-thm}
\end{center}
\end{table}

As for joint equidistributions, we will prove the following four theorems.

%\begin{table}
%\begin{center}
%\begin{tabular}{c|c|c}
%g.f. & irreducible & reducible \\
%\hline
%$\frac{(B+1) t z_1 z_2}{1-A B}$ & $\begin{array}{c}(\lmax,\rmax)\\ (\rmax,\rmin) \\ (\lmax,\lmin) \\  (\lmin,\rmin)\ \end{array}$ &   \\[3mm]
%\hline
%$\frac{A(B+1) t z_1 z_2}{1-A B}$ &  &  $\begin{array}{c}(\lmax,\rmax)\\ (\rmax,\rmin) \\ (\lmax,\lmin) \\  (\lmin,\rmin)\ \end{array}$ 
%\end{tabular}
%\caption{Distributions of statistics in Theorem~\ref{dist-lmax-rmax-thm} depending on $(\stat_1,\stat_2)$}\label{tab-dist-lmax-rmax-thm}
%\end{center}
%\end{table}
%
\begin{thm}\label{dist-lmax-rmax-thm}  The g.f. $S(t,z_1,z_2):=\sum_{\pi}t^{|\pi|}z_1^{\stat_1(\pi)}z_2^{\stat_2(\pi)}$ giving the distribution of $(\stat_1,\stat_2)$ in set \eqref{set2} on separable permutations is 
$$\frac{(S(t,z_1)+1) (S(t,z_2)+1) t z_1 z_2}{1-S(t,z_1) S(t,z_2)}$$
where $S(t,z)$ is given in Theorem~\ref{dist-rmax-thm}. The distributions of  $$(\stat_1,\stat_2)\in\{(\lmax,\rmax),(\lmax,\lmin),(\rmin,\rmax),(\rmin,\lmin)\}$$ on irreducible and reducible separable permutations are given by $\frac{(S(t,z_2)+1) t z_1 z_2}{1-S(t,z_1) S(t,z_2)}$  and $\frac{S(t,z_1)(S(t,z_2)+1) t z_1 z_2}{1-S(t,z_1) S(t,z_2)}$, respectively.
%recorded in Table~\ref{tab-dist-lmax-rmax-thm}.
\end{thm}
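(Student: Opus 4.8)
\quad
The plan is to establish the result for the single representative pair $(\stat_1,\stat_2)=(\lmax,\rmax)$ and then transport it to the other members of the class. For the formula over all separable permutations this transport is immediate: the proposed expression is symmetric in $z_1$ and $z_2$, so Lemma~\ref{equid-lemma} covers every pair in \eqref{set2} after renaming variables. For the finer irreducible/reducible statement the roles of $z_1$ and $z_2$ are asymmetric, so there I would transport along only those transformations that fix irreducibility: by Remark~\ref{reduction-cases-rem} the inverse preserves irreducibility, and the composition of reverse and complement does too, and one checks that $(\lmax,\rmax)$ is carried onto $(\rmin,\rmax)$, $(\rmin,\lmin)$ and $(\lmax,\lmin)$ by the inverse, by reverse-then-complement, and by their composite, respectively, with $\stat_1$ always landing in the role played by $\lmax$.

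The crux is to read off the two statistics from the structural decomposition $\pi=L_1L_2\cdots L_m\, n\, R_mR_{m-1}\cdots R_1$ recalled in the Introduction (Figure~\ref{sepStructure}). Since $n$ is the global maximum and lies to the right of every $L_i$ and to the left of every $R_j$, and since $L_1<R_1<\cdots<L_m<R_m$, every left-to-right maximum of $\pi$ is either $n$ itself or a left-to-right maximum internal to some $L_i$; dually every right-to-left maximum is $n$ or is internal to some $R_j$. I would record this as
\[
\lmax(\pi)=1+\sum_{i=1}^{m}\lmax(L_i),\qquad \rmax(\pi)=1+\sum_{i=1}^{m}\rmax(R_i),
\]
where empty blocks contribute $0$. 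The decisive feature is that $\lmax$ is supported entirely on the left blocks and $\rmax$ entirely on the right blocks, so the two statistics are carried by disjoint, independently chosen pieces.

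With this decoupling the generating function is a routine product. Write $S_1:=S(t,z_1)$ and $S_2:=S(t,z_2)$ for the single-statistic series of Theorem~\ref{dist-rmax-thm}, which is valid for $\lmax$ and for $\rmax$ individually since both lie in the class of $\rmax$. The monomial $t^{|\pi|}z_1^{\lmax(\pi)}z_2^{\rmax(\pi)}$ factors as $tz_1z_2\prod_i(t^{|L_i|}z_1^{\lmax(L_i)})\prod_i(t^{|R_i|}z_2^{\rmax(R_i)})$, so each nonempty inner block $L_2,\dots,L_m$ or $R_1,\dots,R_{m-1}$ contributes a factor $S_1$ or $S_2$, while the possibly empty end blocks $L_1$ and $R_m$ contribute $1+S_1$ and $1+S_2$. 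Summing over the admissible tuples for fixed $m$ gives $tz_1z_2(1+S_1)S_1^{m-1}S_2^{m-1}(1+S_2)$, and the geometric series over $m\ge 1$ yields exactly $\dfrac{(S_1+1)(S_2+1)tz_1z_2}{1-S_1S_2}$. I would sanity-check at $z_1=z_2=1$, where the expression collapses to $\frac{(S(t)+1)t}{1-S(t)}=S(t)$ using $S(t)^2+(t-1)S(t)+t=0$.

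The irreducible/reducible refinement falls out of the same computation by conditioning on $L_1$: a separable permutation is irreducible exactly when $L_1=\varepsilon$, so replacing the $L_1$-factor $1+S_1$ by $1$ gives the irreducible series $\frac{(S_2+1)tz_1z_2}{1-S_1S_2}$ and replacing it by $S_1$ gives the reducible series $\frac{S_1(S_2+1)tz_1z_2}{1-S_1S_2}$, matching the claim. I expect the genuine obstacle to be purely structural rather than computational: one must verify that the decomposition is a bijection, that is, that for a separable permutation the value-intervals $L_i,R_j$ are simultaneously position-intervals and that distinct admissible tuples produce distinct permutations. Only this uniqueness legitimizes treating the blocks as independent and multiplying their generating functions; once it is in hand, everything above is a direct summation.
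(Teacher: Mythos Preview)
Your argument is correct, and it is a genuinely different route from the paper's. The paper proves this theorem in Section~\ref{joint-rmax-lmax-sec} by specialising the master functional equations \eqref{gf-all-stat-eqn} and \eqref{gf-irr-stat-eqn} at $p=q=u=v=1$, invoking the relation \eqref{useful-formula} to rewrite $S(t,x,y)-I(t,x,y)$ as $S(t,x,y)I(t,x)$, and then solving the resulting linear system \eqref{gf-rmax-lmax}--\eqref{irr-gf-rmax-lmax} in the unknowns $S(t,x,y)$ and $I(t,x,y)$. Your approach instead exploits the full $L_1\cdots L_m\, n\, R_m\cdots R_1$ block structure of Figure~\ref{sepStructure} directly, together with the key observation that $\lmax$ is supported on the $L_i$ and $\rmax$ on the $R_j$, so the two statistics decouple and the generating function factors block by block into a geometric series. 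This is more elementary and more transparent for this particular pair --- the closed form drops out without solving any equations, and the irreducible/reducible split is read off by conditioning on whether $L_1=\varepsilon$. The paper's approach, by contrast, is uniform: the same machinery handles Theorems~\ref{dist-asc-des-thm}, \ref{dist-rmax-thm}, \ref{dist-rmax-lmin-thm} and beyond, including pairs such as $(\rmax,\lmin)$ where your decoupling trick fails because both statistics see both sides of~$n$. Your caveat about needing the block decomposition to be a genuine bijection is well taken; the paper asserts this structure in the Introduction without proof, so both arguments rest on it.
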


\begin{thm}\label{dist-rmax-lmin-thm}  The g.f. $S(t,z_1,z_2):=\sum_{\pi}t^{|\pi|}z_1^{\stat_1(\pi)}z_2^{\stat_2(\pi)}$ giving the distribution of $(\stat_1,\stat_2)$ in set \eqref{set1} on separable permutations is  $$\frac{S(t,z_1)I(t,z_2) + t z_1z_2}{1 - S(t,z_1)I(t,z_2) - t z_1 z_2}$$
where $S(t,z_1)$ and $I(t,z_2)$ are given in Theorem~\ref{dist-rmax-thm}. The distributions of  $(\stat_1,\stat_2)$ on irreducible and reducible separable permutations are recorded in Table~\ref{tab-dist-rmax-lmin-thm}.
\end{thm}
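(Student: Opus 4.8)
The plan is to prove the formula for the representative pair $(\stat_1,\stat_2)=(\rmax,\lmin)$; the case $(\lmax,\rmin)$ then follows from the equidistribution in Lemma~\ref{equid-lemma} for set~\eqref{set1}, as licensed by Remark~\ref{reduction-cases-rem}. Throughout I write $S(t,z_1,z_2)=\sum_\pi t^{|\pi|}z_1^{\rmax(\pi)}z_2^{\lmin(\pi)}$ over all separable $\pi$, and let $I(t,z_1,z_2)$ be the same sum restricted to irreducible separable permutations. I abbreviate $S(t,z):=S(t,z,1)$, the common single-statistic generating function of Theorem~\ref{dist-rmax-thm}, and $I(t,z):=I(t,z,1)$, the $\rmax$-distribution (equivalently $\lmin$-distribution) on irreducible separable permutations from Table~\ref{tab-dist-rmax-thm}.

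First I would record how the statistics behave under the two operations, which is immediate from the value/position pictures of $\alpha\oplus\beta$ and $\alpha\ominus\beta$:
\begin{align*}
\rmax(\alpha\oplus\beta)&=\rmax(\beta), & \lmin(\alpha\oplus\beta)&=\lmin(\alpha),\\
\rmax(\alpha\ominus\beta)&=\rmax(\alpha)+\rmax(\beta), & \lmin(\alpha\ominus\beta)&=\lmin(\alpha)+\lmin(\beta),
\end{align*}
and, applying the complement (which swaps $\oplus\leftrightarrow\ominus$, $\rmax\leftrightarrow\rmin$, $\lmin\leftrightarrow\lmax$), the mirror statements $\lmax(\alpha\ominus\beta)=\lmax(\alpha)$ and $\rmin(\alpha\ominus\beta)=\rmin(\beta)$. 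Since every separable permutation is uniquely a $\oplus$-sequence $C_1\oplus\cdots\oplus C_k$ of irreducible components, $S(t)=I(t)/(1-I(t))$, hence $1/(1-I(t))=1+S(t)$, and the relation $\rmax(\pi)=\rmax(C_k)$ yields the basic identity $S(t,z)=(1+S(t))\,I(t,z)$ that I will use repeatedly.

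Next I would compute the two pieces separately. For a reducible permutation, write $\pi=C_1\oplus\rho$ with $C_1$ irreducible and $\rho$ a nonempty separable permutation; then $\lmin(\pi)=\lmin(C_1)$ and $\rmax(\pi)=\rmax(\rho)$ decouple, giving the reducible generating function $I(t,z_2)\,S(t,z_1)$. For the irreducible permutations I would use the skew decomposition: an irreducible separable permutation of length $\ge 2$ is uniquely $D_1\ominus\cdots\ominus D_j$ with $j\ge 2$ and each $D_i$ skew-indecomposable, and since $\rmax,\lmin$ are both additive under $\ominus$ we get $I(t,z_1,z_2)=tz_1z_2+\sum_{j\ge2}\widehat I(t,z_1,z_2)^j$, where $\widehat I$ is the joint $(\rmax,\lmin)$ generating function over skew-indecomposable separable permutations. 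The complement is a bijection from skew-indecomposables to irreducibles carrying $(\rmax,\lmin)$ to $(\rmin,\lmax)$, so $\widehat I(t,z_1,z_2)=\sum_{C\ \mathrm{irr}}t^{|C|}z_1^{\rmin(C)}z_2^{\lmax(C)}$; decomposing these irreducibles by $\ominus$ once more and using that $\rmin$ (resp.\ $\lmax$) comes only from the last (resp.\ first) skew factor, together with the complement identifications and $S(t,z)=(1+S(t))I(t,z)$, collapses everything to
\begin{equation*}
\widehat I(t,z_1,z_2)=tz_1z_2+S(t,z_1)\,I(t,z_2)=:X .
\end{equation*}

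Finally I would assemble the answer. From the previous step $I(t,z_1,z_2)=tz_1z_2+\frac{X^2}{1-X}$, while the reducible part equals $S(t,z_1)I(t,z_2)=X-tz_1z_2$, so
\begin{equation*}
S(t,z_1,z_2)=\Big(tz_1z_2+\tfrac{X^2}{1-X}\Big)+(X-tz_1z_2)=\frac{X}{1-X}=\frac{S(t,z_1)I(t,z_2)+tz_1z_2}{1-S(t,z_1)I(t,z_2)-tz_1z_2},
\end{equation*}
which is the claimed formula, and the two displayed intermediate quantities match the irreducible and reducible entries recorded in Table~\ref{tab-dist-rmax-lmin-thm}. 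The main obstacle is the irreducible generating function $I(t,z_1,z_2)$: it is not visible from a single decomposition, and the crux is to interleave the two indecomposability notions ($\oplus$ for the outer count, $\ominus$ for the irreducible blocks) with the complement symmetry in exactly the right order so that $\widehat I$ telescopes into the closed form $X$. I would guard the bookkeeping, especially the placement of the lone length-one term and the assignment of $z_1,z_2$ to the $\rmax/\rmin$ versus $\lmin/\lmax$ directions, by checking the series expansion through degree three.
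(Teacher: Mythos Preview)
Your argument is correct, and it takes a genuinely different route from the paper. The paper simply specializes the master equations \eqref{gf-all-stat-eqn} and \eqref{gf-irr-stat-eqn} at $p=q=x=v=1$, obtaining the coupled system
\[
S(t,y,u)=yut+S(t,u)I(t,y)+(S(t,y,u)-I(t,y,u)+yut)S(t,y,u),\qquad
I(t,y,u)=yut+(S(t,y,u)-I(t,y,u)+yut)S(t,y,u),
\]
and solves it for $S(t,y,u)$ and $I(t,y,u)$; the swap $S(t,u)I(t,y)=S(t,y)I(t,u)$ is noted via \eqref{useful-formula}. You instead work purely combinatorially: the reducible part drops out of the $\oplus$-decomposition, and for the irreducible part you exploit that $\rmax$ and $\lmin$ are \emph{both additive} under $\ominus$, so the full skew decomposition factorizes into powers of a single building block $\widehat I$, which you then identify (by complement and a second $\ominus$-split) as $tz_1z_2+S(t,z_1)I(t,z_2)$. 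This yields the closed form without solving any equations. Your method is more self-contained and explains structurally why the answer is $X/(1-X)$ with $X=tz_1z_2+S(t,z_1)I(t,z_2)$, but it is tailored to the special feature of the pair $(\rmax,\lmin)$ (simultaneous $\ominus$-additivity); the paper's approach is uniform across Theorems~\ref{dist-rmax-thm}--\ref{dist-general-xyuv-thm}, all being specializations of the same two functional equations.
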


In Theorems~\ref{dist-triple-thm} and~\ref{dist-general-xyuv-thm} we use the following function:
\begin{small}
\begin{equation}\label{func-E}E(t,z_1,z_2,z_3):=z_1z_2z_3t+\frac{(S(t,z_1)+1)(S(t,z_2)+1)(S(t,z_3)+1)t^2z_1^2z_2z_3}{(1-S(t,z_1)S(t,z_3))(1-S(t,z_1)S(t,z_2))}.\end{equation}
\end{small}

\begin{thm}\label{dist-triple-thm}  The g.f. $S(t,z_1,z_2,z_3):=\sum_{\pi}t^{|\pi|}z_1^{\stat_1(\pi)}z_2^{\stat_2(\pi)}z_3^{\stat_3(\pi)}$ giving the distribution of $(\stat_1,\stat_2,\stat_3)$ in set \eqref{set-tripples} on separable permutations is 
$$\frac{E(t,z_1,z_2,z_3)}{1-A(z_2,z_3)-tz_2z_3}$$
where $A(z_2,z_3)=S(t,z_2)\left(z_3t+\frac{S^2(t,z_3)}{S(t,z_3)+1}\right)$ and $S(t,z)$ is given in Theorem~\ref{dist-rmax-thm}. The distributions of  $(stat_1,\stat_2,\stat_3)$ in set \eqref{set-tripples} on irreducible separable permutations (denoted by $I(t,z_1,z_2,z_3)$) and reducible separable permutations is given by the following two cases:
\begin{itemize}
\item The distribution of  $(stat_1,\stat_2,\stat_3)$ in $$\{(\lmax,\rmax,\lmin),(\rmin,\rmax,\lmin)\}$$ on reducible separable permutations is given by 
\begin{equation}\label{reducible-three-stat}E(t,z_1,z_2,z_3)-z_1z_2z_3t\end{equation}
and on irreducible separable permutations by 
\begin{small}
$$\frac{1}{1-A(z_2,z_3)-tz_2z_3}\left(xyut+(A(z_2,z_3)+tz_2z_3)(E(t,z_1,z_2,z_3)-z_1z_2z_3t)\right).$$
\end{small}
\item The distribution of  $(stat_1,\stat_2,\stat_3)$ in $$\{(\lmin,\rmin,\lmax),(\rmax,\rmin,\lmax)\}$$ on irreducible separable permutations is given by $E(t,z_1,z_2,z_3)$ and on reducible separable permutations by 
\begin{small}
$$-z_1z_2z_3t+\frac{1}{1-A(z_2,z_3)-tz_2z_3}(xyut+(A(z_2,z_3)+tz_2z_3)(E(t,z_1,z_2,z_3)-z_1z_2z_3t)).$$
\end{small}
\end{itemize}
\end{thm}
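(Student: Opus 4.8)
The plan is to reduce everything to the single representative triple $(\lmax,\rmax,\lmin)$, marked by $z_1,z_2,z_3$, and then exploit the substitution decomposition of separable permutations, feeding in the one- and two-variable distributions already established in Theorems~\ref{dist-rmax-thm} and~\ref{dist-lmax-rmax-thm}. First I would invoke Lemma~\ref{equid-lemma} and Remark~\ref{reduction-cases-rem}. The inverse fixes $\rmax$ and $\lmin$ and interchanges $\lmax$ and $\rmin$ while preserving irreducibility, so $(\rmin,\rmax,\lmin)$ automatically has the same separable, irreducible, and reducible generating functions as $(\lmax,\rmax,\lmin)$; this gives the first bullet once the representative case is done. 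The complement $c$ is an involution on separable permutations sending $\lmax\mapsto\lmin$, $\rmax\mapsto\rmin$, $\lmin\mapsto\lmax$, so tracking $(\lmax,\rmax,\lmin)$ on $c(\pi)$ equals tracking $(\lmin,\rmin,\lmax)$ on $\pi$; since $c$ maps irreducibles of length $\ge 2$ to reducibles and fixes the permutation $1$, one has $c(\{\text{irreducible}\})=\{\text{reducible}\}\cup\{1\}$ and $c(\{\text{reducible}\})=\{\text{irreducible}\}\setminus\{1\}$. Thus the second-bullet triples $(\lmin,\rmin,\lmax)$ and $(\rmax,\rmin,\lmax)$ will follow by transporting the first-bullet formulas along $c$, the $\pm z_1z_2z_3t$ terms recording the contribution of $1$.

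For the core computation I would use that every nonempty separable permutation is uniquely either irreducible ($\oplus$-indecomposable) or a sum $\alpha_1\oplus\cdots\oplus\alpha_k$ with $k\ge 2$ irreducible parts, and dually either $\ominus$-indecomposable or a skew sum $\beta_1\ominus\cdots\ominus\beta_\ell$ with $\ell\ge 2$ $\ominus$-indecomposable parts; moreover an irreducible permutation is $1$ or $\ominus$-decomposable, and a $\ominus$-indecomposable one is $1$ or reducible. Reading off the transfer rules, $\lmax$ is additive under $\oplus$ and comes only from the leftmost part under $\ominus$; $\rmax$ comes only from the rightmost part under $\oplus$ and is additive under $\ominus$; and $\lmin$ comes only from the leftmost part under $\oplus$ and is additive under $\ominus$. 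Writing $I,R,J$ for the generating functions of irreducible, reducible, and $\ominus$-indecomposable separable permutations (the argument $t$ suppressed), these rules yield
\[
R=\frac{I(z_1,1,z_3)\,I(z_1,z_2,1)}{1-I(z_1,1,1)},\qquad J=z_1z_2z_3t+R,\qquad S=\frac{J}{1-J(1,z_2,z_3)},
\]
the last identity arising from $S=J+D$ with skew-decomposable part $D=J\,J(1,z_2,z_3)/(1-J(1,z_2,z_3))$, which telescopes.

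Next I would substitute the known specializations. From Theorem~\ref{dist-lmax-rmax-thm}, the irreducible two-variable gfs are $I(z_1,1,z_3)=(S(t,z_3)+1)tz_1z_3/(1-S(t,z_1)S(t,z_3))$ (the pair $(\lmax,\lmin)$) and $I(z_1,z_2,1)=(S(t,z_2)+1)tz_1z_2/(1-S(t,z_1)S(t,z_2))$ (the pair $(\lmax,\rmax)$), while the single-variable case of Theorem~\ref{dist-rmax-thm} gives $I(z_1,1,1)=S(t,z_1)/(1+S(t,z_1))$, so that $1/(1-I(z_1,1,1))=1+S(t,z_1)$. Plugging these into $R$ shows $J=z_1z_2z_3t+R=E(t,z_1,z_2,z_3)$. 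Setting $z_1=1$ and using the single-variable identity $S(t,z_2)=I(1,z_2,1)/(1-I(1,1,1))$ identifies $R(1,z_2,z_3)=S(t,z_2)\,I(1,1,z_3)=A(z_2,z_3)$, whence $1-J(1,z_2,z_3)=1-A(z_2,z_3)-tz_2z_3$ and $S=E/(1-A-tz_2z_3)$. The reducible part is $R=E-z_1z_2z_3t$, which is \eqref{reducible-three-stat}, and the irreducible part $S-R$ simplifies to the displayed first-bullet expression. Finally the second-bullet formulas follow from the complement transport of the first paragraph: $(\lmin,\rmin,\lmax)$ on irreducibles equals $(\lmax,\rmax,\lmin)$ on $\{\text{reducible}\}\cup\{1\}$, namely $(E-z_1z_2z_3t)+z_1z_2z_3t=E$, and on reducibles it equals the first-bullet irreducible gf with the $1$-contribution $z_1z_2z_3t$ removed.

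The main obstacle is the bookkeeping rather than any single hard step: getting the six transfer rules exactly right, matching each specialized one- and two-variable gf to the correct entry of Theorems~\ref{dist-rmax-thm} and~\ref{dist-lmax-rmax-thm} (in particular the delicate placement of the permutation $1$, the unique complement fixed point lying in the irreducible class, which is what produces the $\pm z_1z_2z_3t$ terms separating the two bullets), and verifying the telescoping that collapses the $\ominus$-decomposition to $S=J/(1-J(1,z_2,z_3))$ together with the algebraic cancellations confirming $J=E$ and $1-J(1,z_2,z_3)=1-A-tz_2z_3$.
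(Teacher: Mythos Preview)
Your argument is correct and follows essentially the same strategy as the paper: both exploit the $\oplus/\ominus$ block structure of separable permutations on the representative triple $(\lmax,\rmax,\lmin)$, feed in the previously established one- and two-variable generating functions, and then transport the answer to the other triples via Lemma~\ref{equid-lemma}, Remark~\ref{reduction-cases-rem} and Theorem~\ref{gf-transformation}.

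There is one organisational difference worth noting. The paper obtains the three-variable system by specialising the master functional equations \eqref{gf-all-stat-eqn}--\eqref{gf-irr-stat-eqn} (which peel off a single $L_1$ or $R_1$ block), arriving at \eqref{gf-lmax-rmax-lmin-1}--\eqref{irr-gf-lmax-rmax-lmin-1}, and then solves these using all three of Theorems~\ref{dist-rmax-thm}, \ref{dist-lmax-rmax-thm} and \ref{dist-rmax-lmin-thm}; the pair $(\rmax,\lmin)$ enters through the factor $S(t,y,u)$. You instead introduce the auxiliary generating function $J$ for $\ominus$-indecomposable separable permutations and obtain the closed form $S=J/(1-J(1,z_2,z_3))$ directly from the full block decomposition, which lets you bypass Theorem~\ref{dist-rmax-lmin-thm}: your $1-J(1,z_2,z_3)=1-A(z_2,z_3)-tz_2z_3$ is computed straight from Theorems~\ref{dist-rmax-thm} and \ref{dist-lmax-rmax-thm}, and one checks a posteriori that it equals $1/(1+S(t,y,u))$. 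Both routes are short and yield the same formulas; yours is slightly more self-contained in that it needs one fewer earlier result, while the paper's route stays within the uniform framework of the master equations.
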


\begin{thm}\label{dist-general-xyuv-thm} The g.f. $S(t,x,y,u,v):=\sum_{\pi}t^{|\pi|}x^{\lmax(\pi)}y^{\rmax(\pi)}u^{\lmin(\pi)}v^{\rmin(\pi)}$  giving the distribution of the four statistics over separable permutations is  
$$xyuvt+E(t,x,y,u)S(t,y,u,v)+E(t,x,y,v)S(t,x,u,v)$$
where $S(t,y,u,v)$ and $S(t,x,u,v)$ are given by Theorem~\ref{dist-triple-thm} and $E(t,z_1,z_2,z_3)$ by \eqref{func-E}. The respective g.f. $I(t,x,y,u,v)$ for irreducible separable permutations is  $$xyuvt+E(t,x,y,u)S(t,y,u,v)$$ and for reducible separable permutations is $E(t,x,y,v)S(t,x,u,v)$.
\end{thm}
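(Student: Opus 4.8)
The plan is to split all separable permutations into irreducible and reducible ones and to compute the two contributions separately by peeling off, respectively, an outermost skew-component and an outermost direct-component. The engine of the argument is the elementary bookkeeping of how the four statistics behave under $\oplus$ and $\ominus$. Writing $\pi=\alpha\oplus\beta$, one checks directly from the definitions that $\lmax$ and $\rmin$ are additive, $\lmax(\pi)=\lmax(\alpha)+\lmax(\beta)$ and $\rmin(\pi)=\rmin(\alpha)+\rmin(\beta)$, while the remaining two statistics are read off from a single summand, $\lmin(\pi)=\lmin(\alpha)$ and $\rmax(\pi)=\rmax(\beta)$. Dually, for $\pi=\alpha\ominus\beta$ the statistics $\rmax$ and $\lmin$ are additive, whereas $\lmax(\pi)=\lmax(\alpha)$ and $\rmin(\pi)=\rmin(\beta)$. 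These four rules are exactly what route each of the variables $x,y,u,v$ into the correct factor below.

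For the reducible permutations I would use that a reducible separable permutation is uniquely $\pi=\pi''\oplus C$, where $C$ is its last irreducible component and $\pi''$ is an arbitrary separable permutation; conversely any such pair yields a reducible permutation whose last component is $C$. By the $\oplus$-rules, $x^{\lmax}v^{\rmin}$ distributes across the two factors, $u^{\lmin}$ is carried by $\pi''$ alone, and $y^{\rmax}$ by $C$ alone. Hence the reducible contribution factors as the separable generating function tracking $(\lmax,\lmin,\rmin)$ on $\pi''$ times the irreducible generating function tracking $(\lmax,\rmax,\rmin)$ on $C$. Recognizing these statistic triples as members of set \eqref{set-tripples} and invoking Theorem~\ref{dist-triple-thm} (the separable triple-g.f.\ and the function $E$ as the irreducible triple-g.f.), this product is exactly $E(t,x,y,v)\,S(t,x,u,v)$, the claimed reducible generating function.

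For the irreducible permutations I would exploit that, for separable permutations of length at least $2$, being irreducible (i.e.\ $\oplus$-indecomposable) is equivalent to being $\ominus$-decomposable: each such permutation is exactly one of $\oplus$- or $\ominus$-decomposable, since value $1$ lies to the right of value $n$ in the latter case but to the left in the former. Thus every irreducible $\pi$ with $|\pi|\ge 2$ is uniquely $\pi=\sigma\ominus\tau$ with $\sigma$ the ($\ominus$-indecomposable, hence reducible or length-$1$) first skew-component and $\tau$ an arbitrary separable permutation, while $\pi=1$ contributes the base term $xyuv\,t$. By the $\ominus$-rules, $y^{\rmax}u^{\lmin}$ splits across the factors, $x^{\lmax}$ stays with $\sigma$, and $v^{\rmin}$ with $\tau$. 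The $\tau$-factor is the separable triple-g.f.\ for $(\rmax,\lmin,\rmin)$, namely $S(t,y,u,v)$. The $\sigma$-factor sums $x^{\lmax}y^{\rmax}u^{\lmin}$ over all $\ominus$-indecomposable separable permutations, i.e.\ over the single permutation $1$ together with all reducible separable permutations; since Theorem~\ref{dist-triple-thm} gives the reducible $(\lmax,\rmax,\lmin)$-g.f.\ as $E(t,x,y,u)-xyu\,t$, adding the length-$1$ term $xyu\,t$ collapses this factor to exactly $E(t,x,y,u)$. This yields $I(t,x,y,u,v)=xyuv\,t+E(t,x,y,u)\,S(t,y,u,v)$.

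Adding the two contributions gives the stated formula for $S(t,x,y,u,v)$, with the irreducible and reducible halves being precisely the two displayed generating functions. The main obstacle is not a single hard computation but disciplined bookkeeping: one must verify the eight statistic-transformation rules under $\oplus$ and $\ominus$ without error, and then correctly match each resulting triple of statistics to its representative in set \eqref{set-tripples}. Here a genuine subtlety is that the arguments of $E$ and $S$ in the target formula are the \emph{variables attached to the relevant statistics}, so mapping them to the argument order used in Theorem~\ref{dist-triple-thm} and \eqref{func-E} requires care, because $S(t,z_1,z_2,z_3)$ is not symmetric in its arguments; one must also be careful to use the reducible half of Theorem~\ref{dist-triple-thm} for the $\sigma$-factor but the irreducible half for the $C$-factor. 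The one essential structural input is the equivalence ``irreducible $\Leftrightarrow$ $\ominus$-decomposable'' for separable permutations of length at least $2$, which is what makes the skew-peeling legitimate.
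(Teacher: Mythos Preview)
Your approach is essentially the same as the paper's: both split into irreducible and reducible pieces by peeling off the first $\ominus$-component (Cases~1--3 of the paper's decomposition) and the last $\oplus$-component (Case~4), track how the four statistics distribute across the two factors, and then invoke Theorem~\ref{dist-triple-thm} and \eqref{reducible-three-stat} to identify the resulting triple-g.f.'s with $E$ and $S$. The only difference is packaging: the paper obtains \eqref{gf-all-stat-eqn-but-two} and \eqref{gf-irr-stat-eqn-but-two} by specializing the master equations \eqref{gf-all-stat-eqn}--\eqref{gf-irr-stat-eqn} at $p=q=1$, whereas you re-derive that specialization directly from the $\oplus/\ominus$ bookkeeping.
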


Finally, in Section~\ref{final-sec} we give several unimodality conjectures confirming which would extend our knowledge in \cite{FuLinZeng} on unimodality of the distribution of descents on separable permutations.  

\begin{table}
\begin{center}
\begin{tabular}{c|c|c}
g.f. & irreducible & reducible \\
\hline
&& \\[-3mm]
$\frac{S(t,z_1)I(t,z_2) + t z_1 z_2}{1 - S(t,z_1)I(t,z_2) - t z_1 z_2} -S(t,z_1)I(t,z_2)$ & $(\rmax,\lmin)$ &  $(\lmax,\rmin)$  \\[2mm]
\hline
&& \\[-3mm]
$S(t,z_1)I(t,z_2)$ & $(\lmax,\rmin)$     &  $(\rmax,\lmin)$  
\end{tabular}
\caption{Distributions of statistics in Theorem~\ref{dist-rmax-lmin-thm} depending on $(\stat_1,\stat_2)$}\label{tab-dist-rmax-lmin-thm}
\end{center}
\end{table}

\section{The key functional equations and specializations}\label{joint-all-sec}

We begin with deriving the following functional equations:
\begin{small}
\begin{eqnarray}\label{gf-all-stat-eqn} & & S(t,p,q,x,y,u,v) = xyuvt + pS(t,p,q,x,1,u,v)I(t,p,q,x,y,1,v)+  \\ \nonumber
& &  \hspace{0.5cm}q(S(t,p,q,x,y,u,1)-I(t,p,q,x,y,u,1)+xyut)S(t,p,q,1,y,u,v);\end{eqnarray}

\vspace{-1cm}

\begin{eqnarray}\label{gf-irr-stat-eqn} & & I(t,p,q,x,y,u,v) =  xyuvt +  \\ \nonumber
& &     \hspace{0.5cm}q(S(t,p,q,x,y,u,1)-I(t,p,q,x,y,u,1)+xyut)S(t,p,q,1,y,u,v). \nonumber
\end{eqnarray}
\end{small}

Referring to the structure of a separable permutation $\pi$ in Figure~\ref{sepStructure}, we have the following four disjoint possibilities, where Cases 1--3 cover irreducible permutations and Case 4 covers reducible permutations. 

\begin{enumerate}
\item $\pi=1$ giving the term of $xyuvt$.
\item $\pi=nR_1=1\ominus R_1$, $R_1\neq\varepsilon$, giving the term of $qxyutS(t,p,q,1,y,u,v)$ because $\lmax(R_1)$ does not contribute to $\lmax(\pi)=1$ and $R_1$ can be any separable permutation, while $n$ contributes an extra descent and one extra left-to-right minimum and one extra right-to-left maximum.
\item $\pi=\pi'\ominus R_1$ where $R_1\neq\varepsilon$ ($L_1=\varepsilon$) and $\pi'$ is a reducible separable permutation (of length at least 2). In this case, we have the term of 
$$q(S(t,p,q,x,y,u,1)-I(t,p,q,x,y,u,1))S(t,p,q,1,y,u,v)$$ since 
\begin{itemize}
\item $\pi'$  does not contribute to the statistic $\rmin$ in $\pi$ and hence has g.f. $S(t,p,q,x,y,u,1)-I(t,p,q,x,y,u,1)$,
\item $R_1$ does not contribute to the statistic $\lmax$ in $\pi$ and hence has g.f. $S(t,p,q,1,y,u,v)$, 
\item $\pi'$  is independent from $R_1$, and
\item an extra descent is formed between $\pi'$ and $R_1$.
\end{itemize}
\item $\pi=L_1\oplus \pi''$ where $L_1\neq\varepsilon$ and $\pi''$ is an irreducible separable permutation. In this case, we have the term of 
$pS(t,p,q,x,1,u,v)I(t,p,q,x,y,1,v)$ since 
\begin{itemize}
\item $L_1$ does not contribute to the statistic $\rmax$ in $\pi$,
\item $\pi''$ does not contribute to the statistic $\lmin$ in $\pi$,
\item $L_1$ is independent from $\pi''$, and 
\item an extra ascent is formed between $L_1$ and $\pi''$.
\end{itemize}
\end{enumerate}

\noindent
Summarising Cases 1--4 gives \eqref{gf-all-stat-eqn}. Summarising Cases 1--3 gives \eqref{gf-irr-stat-eqn}.   Hence, Theorem~\ref{dist-most-general-thm} holds.

The following theorem will be useful for us to rewrite, in several occasions, special cases coming from  \eqref{gf-all-stat-eqn} (after setting some of the variables be equal to 1) in a shorter form.

\begin{thm}\label{useful-relation} The following equalities hold
\begin{small}
\begin{eqnarray}
 S(t,p,q,x,y,u,v)-I(t,p,q,x,y,u,v) \ = &pI(t,p,q,x,u,v)S(t,p,q,x,y,v) \notag \\ 
  = &pI(t,p,q,x,y,v)S(t,p,q,x,u,v). \label{useful-formula} 
\end{eqnarray}
\end{small}\end{thm}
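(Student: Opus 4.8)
The plan is to prove Theorem~\ref{useful-relation} combinatorially, exploiting the same structural decomposition of separable permutations that produced the key functional equations \eqref{gf-all-stat-eqn} and \eqref{gf-irr-stat-eqn}. The left-hand side $S(t,p,q,x,y,u,v)-I(t,p,q,x,y,u,v)$ is, by definition, the generating function over \emph{reducible} separable permutations (with all six statistics tracked). So the entire statement reduces to finding two product formulas for the reducible-permutation generating function and matching their statistic bookkeeping.

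First I would recall that every reducible separable permutation $\pi$ has a leading irreducible component. Equivalently, $\pi = L_1 \oplus \pi''$ where $L_1 \neq \varepsilon$ is the first irreducible component (hence itself an irreducible separable permutation) and $\pi''$ is an arbitrary nonempty separable permutation (the rest of $\pi$). This is precisely the dual of Case~4 in the derivation above, and it is a bijection between reducible separable permutations and pairs (irreducible separable $L_1$, separable $\pi''$). I would then track how each of the six statistics splits across this decomposition: under $\oplus$, the elements of $L_1$ sit below and to the left of all of $\pi''$, so $\lmax(\pi)=\lmax(\pi'')$ (the elements of $L_1$ are all dominated by those of $\pi''$, so no left-to-right maximum survives from $L_1$ except as absorbed), while $\lmin(\pi)=\lmin(L_1)$, $\rmax(\pi)=\rmax(\pi'')$, and the right-to-left minima and the descent/ascent counts combine additively with exactly one extra ascent created at the junction between $L_1$ and $\pi''$. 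Carefully reading off which variable each block contributes, the pair $(L_1,\pi'')$ yields $p\cdot I(t,p,q,x,u,v)\cdot S(t,p,q,x,y,v)$, which gives the first equality in \eqref{useful-formula}.

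For the second equality I would use the reverse-type decomposition from the \emph{other} end: a reducible separable permutation can equally be written as $\pi = \pi''' \oplus L$ where $L$ is the \emph{last} component and $\pi'''$ is the (possibly reducible, possibly irreducible, but nonempty) prefix, and then iterate, or—more cleanly—set up the symmetric decomposition in which the last irreducible piece plays the role of the irreducible factor. The point is that reducibility of $\pi$ guarantees at least one $\oplus$-junction, and one may peel the irreducible factor off at whichever end is convenient; peeling from the opposite side swaps the roles of the $u$-block and the $y$-block in the bookkeeping, producing $p\cdot I(t,p,q,x,y,v)\cdot S(t,p,q,x,u,v)$. Since both expressions count the same set (all reducible separable permutations) with the same six statistics, they are equal, establishing \eqref{useful-formula}. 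Alternatively, one can avoid a second bijection entirely and derive the second equality purely algebraically by invoking Lemma~\ref{equid-lemma}: applying the reverse (or the appropriate composition with complement) interchanges $\lmin \leftrightarrow \rmin$ and fixes the relevant statistics, which sends $I(t,p,q,x,u,v)S(t,p,q,x,y,v)$ to $I(t,p,q,x,y,v)S(t,p,q,x,u,v)$ while preserving the reducible-permutation generating function up to the equidistribution in Remark~\ref{reduction-cases-rem}.

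The main obstacle I expect is the statistic bookkeeping rather than the structural bijection: one must verify precisely which of $\lmax,\rmax,\lmin,\rmin$ is \emph{not} contributed by each block under $\oplus$, and confirm that no statistic is double-counted or lost at the junction, so that the single factor of $p$ (one extra ascent, no extra descent) and the restricted variable lists in $I(t,p,q,x,u,v)$ and $S(t,p,q,x,y,v)$ come out exactly right. The cleanest safeguard is to cross-check the formula against the already-established equations: subtracting \eqref{gf-irr-stat-eqn} from \eqref{gf-all-stat-eqn} isolates $S-I$ as the sum of the Case~4 term $pS(t,p,q,x,1,u,v)I(t,p,q,x,y,1,v)$ alone, so \eqref{useful-formula} is essentially a repackaging of that single term after relabelling which side the irreducible factor is extracted from; confirming that the two-sided peeling gives consistent variable assignments is the delicate—but routine—verification.
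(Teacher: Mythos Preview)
Your approach is essentially the paper's own: write a reducible separable permutation as $\pi'\oplus\pi''$ with one factor the first (resp.\ last) irreducible component and the other factor an arbitrary separable permutation, then track the six statistics across the $\oplus$ junction. One point of bookkeeping is wrong, though: your claim that $\lmax(\pi)=\lmax(\pi'')$ is false. Under $L_1\oplus\pi''$, every left-to-right maximum of $L_1$ is still a left-to-right maximum of $\pi$ (nothing larger has been seen yet), and every left-to-right maximum of $\pi''$ is also one of $\pi$ (since all of $\pi''$ exceeds all of $L_1$); hence $\lmax(\pi)=\lmax(L_1)+\lmax(\pi'')$. This additivity is exactly why $x$ appears in \emph{both} factors of $p\,I(t,p,q,x,u,v)\,S(t,p,q,x,y,v)$, so your written formula is correct even though the sentence justifying it contradicts it. The same additivity holds for $\rmin$ (as you note), while $\rmax$ and $\lmin$ are each contributed by only one block; with that sentence corrected, the argument is identical to the paper's. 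Incidentally, your cross-check---subtracting \eqref{gf-irr-stat-eqn} from \eqref{gf-all-stat-eqn}---directly yields $p\,S(t,p,q,x,u,v)\,I(t,p,q,x,y,v)$, i.e.\ the \emph{second} form in \eqref{useful-formula}, which already confirms one of the two equalities without any further bijection.
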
 

\begin{proof} Clearly, $S(t,p,q,x,y,u,v)-I(t,p,q,x,y,u,v)$ is the g.f. for all reducible separable permutations. We have two more ways to generate all such permutations. Indeed, each reducible permutation $\pi$ is of the form $\pi'\oplus\pi''$, and we can think of $\pi'$ and $\pi''$ in two ways obtaining the equalities:
\begin{itemize}
\item $\pi'$ is an irreducible separable permutation and $\pi''$ is any separable permutation; 
\item $\pi'$ is any separable permutation and $\pi''$ is an irreducible separable permutation. 
\end{itemize}
In both cases $\rmax(\pi')$ (resp., $\lmin(\pi'')$) does not contribute to $\rmax(\pi)$ (resp., $\lmin(\pi)$) and hence the variable $y$ (resp., $u$) must be set to 1 in the respective g.f.'s; also, the factor of $q$ corresponds to the extra ascent formed between $\pi'$ and $\pi''$.  
\end{proof}

The following theorem is a direct consequence to Remark~\ref{reduction-cases-rem} and the fact that the permutation 1 is irreducible, and it is used by us implicitly when considering a single set of statistics from an equivalence class in proofs of theorems.

\begin{thm}\label{gf-transformation} Suppose  that $I$ and $R$ are g.f.'s giving joint distributions of statistics in $(\stat_1,\ldots,\stat_m)$ recorded, respectively, by variables in $\{z_1,\ldots,z_m\}$ on irreducible and reducible separable permutations, respectively (and $t$ records the length of permutations). Further, let statistic $\stat_i$ goes to statistic $\stat'_i$ for $1\leq i\leq m$ under the reverse (or complement) operation. Then $I-z_1\cdots z_mt$ and $R+z_1\cdots z_mt$ are g.f. for joint distributions of  statistics in $(\stat'_1,\ldots,\stat'_m)$ on reducible and irreducible separable permutations, respectively. \end{thm}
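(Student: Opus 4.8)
The plan is to exploit the three symmetry operations on permutations---reverse, complement, and inverse---that were already used in Lemma~\ref{equid-lemma} and summarized in Remark~\ref{reduction-cases-rem}, and to track carefully how each operation interacts with the reducible/irreducible dichotomy. The key structural fact, recorded in Remark~\ref{reduction-cases-rem}, is that for a permutation $\pi$ of length at least $2$, the reverse (and, separately, the complement) of $\pi$ is irreducible if and only if $\pi$ is reducible. Thus the reverse operation is a length-preserving bijection that swaps the set of reducible separable permutations of length $\geq 2$ with the set of irreducible separable permutations of length $\geq 2$. The only separable permutation this bijection does not neatly handle is the permutation $1$, which is irreducible, has length $1$, and is fixed by reverse; this single permutation is exactly what the correction terms $\pm z_1\cdots z_m t$ are there to absorb.

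First I would fix the reverse operation (the complement case is identical) and let $\phi$ denote the induced involution on separable permutations. By hypothesis, $\stat_i(\pi)=\stat'_i(\phi(\pi))$ for each $i$, so $\phi$ matches the monomial $t^{|\pi|}z_1^{\stat_1(\pi)}\cdots z_m^{\stat_m(\pi)}$ to the monomial $t^{|\phi(\pi)|}z_1^{\stat'_1(\phi(\pi))}\cdots z_m^{\stat'_m(\phi(\pi))}$. Summing over all irreducible separable $\pi$ therefore produces, on the image side, a sum over all \emph{reducible} separable permutations of length $\geq 2$ of the $(\stat'_1,\ldots,\stat'_m)$-monomials, \emph{except} that the contribution of $\pi=1$ lands on $\phi(1)=1$, which is irreducible rather than reducible. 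Concretely, the g.f.\ $I$ for the $(\stat_1,\ldots,\stat_m)$-statistics on irreducible separable permutations decomposes as the contribution of the permutation $1$, namely $z_1\cdots z_m t$, plus the contribution of irreducible permutations of length $\geq 2$. Under $\phi$ the latter part becomes precisely the g.f.\ for $(\stat'_1,\ldots,\stat'_m)$ on reducible separable permutations, which is $R'$ say. Hence $I-z_1\cdots z_m t = R'$, giving the first claimed identity that $I-z_1\cdots z_m t$ is the g.f.\ for $(\stat'_1,\ldots,\stat'_m)$ on reducible separable permutations.

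Next I would treat the dual direction symmetrically. Starting from the g.f.\ $R$ for $(\stat_1,\ldots,\stat_m)$ on reducible separable permutations (all of which have length $\geq 2$) and applying $\phi$, every such permutation maps bijectively to an irreducible separable permutation of length $\geq 2$, again matching monomials after the $\stat_i\to\stat'_i$ relabeling. The resulting sum accounts for all irreducible separable permutations of length $\geq 2$ carrying the $(\stat'_1,\ldots,\stat'_m)$-statistics but misses the permutation $1$; adding back its monomial $z_1\cdots z_m t$ completes the g.f.\ for $(\stat'_1,\ldots,\stat'_m)$ on \emph{all} irreducible separable permutations. This yields that $R+z_1\cdots z_m t$ is the g.f.\ for $(\stat'_1,\ldots,\stat'_m)$ on irreducible separable permutations, as claimed.

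The only genuinely delicate point, and the one I would state most carefully, is the bookkeeping for the exceptional permutation $1$: one must verify that $\phi$ fixes it, that it is irreducible (so it belongs to $I$ but not $R$), and that its monomial is exactly $z_1\cdots z_m t$ under the stated convention that variable $z_i$ records $\stat_i$ and that $\stat_i(1)=1$ for every maxima/minima statistic---which holds since the lone entry is simultaneously a left-to-right, right-to-left, maximum and minimum. Everything else is a direct transport of the sum under a length-preserving, statistic-relabeling bijection, so no computation beyond this case analysis is required. I would remark that the complement operation gives the identical conclusion because it too swaps reducibility with irreducibility for length $\geq 2$ and fixes $1$, which is why the statement offers reverse or complement interchangeably.
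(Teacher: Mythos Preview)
Your proof is correct and follows the same approach the paper indicates: the paper states that the theorem is a direct consequence of Remark~\ref{reduction-cases-rem} (reverse/complement swap reducibility for length $\geq 2$) together with the fact that the permutation $1$ is irreducible, and you have simply written out this argument in full detail, including the bookkeeping for the exceptional permutation $1$ that produces the $\pm z_1\cdots z_m t$ correction.
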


\subsection{Joint distribution of ascents and descents}\label{dist-asc-des-sec}

Our next goal is proving Theorem~\ref{dist-asc-des-thm} for
$$S(t,p,q)=\sum_{\pi}t^{|\pi|}p^{\asc(\pi)}q^{\des(\pi)}$$
where the sum is taken over all separable permutations, and for $I(t,p,q)$ obtained by restricting $S(t,p,q)$ to irreducible permutations. 

By (\ref{useful-formula}), we have
\begin{equation}\label{pIS=S-I} S(t,p,q)-I(t,p,q)=pI(t,p,q)S(t,p,q). \end{equation}
Setting $x=y=u=v=1$ in (\ref{gf-all-stat-eqn}) and (\ref{gf-irr-stat-eqn}), and using (\ref{pIS=S-I}) in the specialization of (\ref{gf-all-stat-eqn}), we obtain
\begin{equation}\label{gf-asc-des} S(t,p,q)=t(qS(t,p,q)+1)+pqIS^2(t,p,q)+pI(t,p,q)S(t,p,q);\end{equation}
\begin{equation}\label{irr-gf-asc-des} I(t,p,q)= t(qS(t,p,q)+1)+ q(S(t,p,q)-I)S(t,p,q).\end{equation}
Solving  (\ref{irr-gf-asc-des}) for $I(t,p,q)$ we obtain \eqref{asc-des-irr-relation}, and substituting the solution into (\ref{gf-asc-des}), we obtain \eqref{asc-des-relation} hence completing our proof of Theorem~\ref{dist-asc-des-thm}.

\subsection{Distribution of a statistic in $\{\lmax,\rmax, \lmin, \rmin\}$}\label{dist-rmax-sec}

By Remark~\ref{reduction-cases-rem}, to prove Theorem~\ref{dist-rmax-thm} it is sufficient to prove it for the g.f.
$$S(t,y)=\sum_{\pi}t^{|\pi|}y^{\rmax(\pi)}$$
where the sum is taken over all separable permutations, and for the respective g.f. $I(t,y)$ obtained by restricting $S(t,y)$ to irreducible permutations. 

By (\ref{useful-formula}), we have
\begin{equation}\label{I(t)S=S-I} S(t,y)-I(t,y)=I(t)S(t,y)=I(t,y)S(t). \end{equation}
Setting $p=q=x=u=v=1$ in (\ref{gf-all-stat-eqn}) and (\ref{gf-irr-stat-eqn}), and using (\ref{I(t)S=S-I}) in the specialization of (\ref{gf-all-stat-eqn}), we obtain
\begin{equation}\label{gf-rmax} S(t,y)=yt(S(t,y)+1) + I(t)(S(t,y)+1)S(t,y);\end{equation}
\begin{equation}\label{irr-gf-rmax} I(t,y)= yt(S(t,y)+1)+ (S(t,y)-I(t,y))S(t,y).\end{equation}
Solving simultaneously the system of equations (\ref{gf-rmax}) and (\ref{irr-gf-rmax}), and using the observations in Remark~\ref{reduction-cases-rem} on reducibility of permutations under reverse and complement, we complete the proof of Theorem~\ref{dist-rmax-thm}.

In Tables~\ref{tab-rmax-sep},~\ref{tab-rmax-irr-sep} and~\ref{tab-rmin-irr-sep} we present, respectively, initial values for the distribution of a statistic in $\{\lmax,\rmax,\lmin,\rmin\}$ on separable permutations, a statistic in $\{\rmax,\lmin\}$ on irreducible separable permutations, and a statistic in $\{\lmax,\rmin\}$ on irreducible separable permutations.

\subsection{Joint distribution of $\lmax$ and $\rmax$}\label{joint-rmax-lmax-sec}

By Remark~\ref{reduction-cases-rem}, to prove Theorem~\ref{dist-lmax-rmax-thm} it is sufficient to prove it for the g.f.
$$S(t,x,y)=\sum_{\pi}t^{|\pi|}x^{\lmax(\pi)}y^{\rmax(\pi)}$$
where the sum is taken over all separable permutations, and for the respective g.f. $I(t,x,y)$ for irreducible separable permutations.  

By (\ref{useful-formula}), we have
\begin{equation}\label{twice-IS=S-I} S(t,x,y)-I(t,x,y)=S(t,x)I(t,x,y)=S(t,x,y)I(t,x). \end{equation}
Setting $p=q=u=v=1$ in (\ref{gf-all-stat-eqn}) and (\ref{gf-irr-stat-eqn}), and using (\ref{twice-IS=S-I}) in the specialization of (\ref{gf-all-stat-eqn}), we obtain
\begin{equation}\label{gf-rmax-lmax} S(t,x,y)=xyt(S(t,y)+1) + S(t,x,y)I(t,x)(S(t,y)+1);\end{equation}
\begin{equation}\label{irr-gf-rmax-lmax} I(t,x,y) = xyt(S(t,y)+1)+ (S(t,x,y)-I(t,x,y))S(t,y).\end{equation}
Solving simultaneously the system of equations (\ref{gf-rmax-lmax}) and (\ref{irr-gf-rmax-lmax}), and using the observations in Remark~\ref{reduction-cases-rem} on reducibility of permutations under reverse, complement and inverse, and Theorem~\ref{gf-transformation}, we complete the proof of Theorem~\ref{dist-lmax-rmax-thm}.

The initial terms of $S(t,x,y)$ are
\begin{footnotesize}
\begin{eqnarray}
&& t ( xy ) + t^2 ( x^2y + xy^2 ) + t^3 ( x^3y + 2x^2y^2 + xy^3 + x^2y + xy^2 ) + \notag \\
&& t^4 ( x^4y + 3x^3y^2 + 3x^2y^3 + xy^4 + 3x^3y + 4x^2y^2 + 3xy^3 + 2x^2y + 2xy^2 ) + \cdots. \notag
%t^5 ( x^5y + 4x^4y^2 + 6x^3y^3 + 4x^2y^4 + xy^5 + 6x^4y + 11x^3y^2 + 11x^2y^3 + 6xy^4 + 9x^3y + 10x^2y^2 + 9xy^3 + 6x^2y + 6xy^2 )
\end{eqnarray}
\end{footnotesize}
The initial terms of $I(t,x,y)$ are
\begin{footnotesize}
\begin{eqnarray}
&& t ( xy ) + t^2 ( xy^2 ) +  t^3 ( x^2y^2 + xy^3 + xy^2 ) + t^4 ( x^3y^2 + 2x^2y^3 + xy^4 + 2x^2y^2 + 3xy^3 + 2xy^2 )+\notag \\
&& t^5 ( x^4y^2 + 3x^3y^3 + 3x^2y^4 + xy^5 + 4x^3y^2 + 7x^2y^3 + 6xy^4 + 5x^2y^2 + 9xy^3 + 6xy^2 ) + \cdots. \notag 
\end{eqnarray}
\end{footnotesize}

\subsection{Joint distribution of $\rmax$ and $\lmin$}\label{joint-rmax-lmin-sec}  By Remark~\ref{reduction-cases-rem}, to prove Theorem~\ref{dist-rmax-lmin-thm} it is sufficient to prove it for the g.f.
$$S(t,y,u)=\sum_{\pi}t^{|\pi|}y^{\rmax(\pi)}u^{\lmin(\pi)}$$
where the sum is taken over all separable permutations, and for the respective g.f. $I(t,y,u)$ for irreducible separable permutations.  

Setting $p=q=x=v=1$ in (\ref{gf-all-stat-eqn}) and (\ref{gf-irr-stat-eqn}) we obtain
\begin{equation}\label{gf-rmax-lmin} S(t,y,u)=yut + S(t,u)I(t,y) + (S(t,y,u)-I(t,y,u)+yut)S(t,y,u);\end{equation}
\begin{equation}\label{irr-gf-rmax-lmin} I(t,y,u) = yut + (S(t,y,u)-I(t,y,u)+yut)S(t,y,u).\end{equation}
Solving simultaneously the system of equations (\ref{gf-rmax-lmin}) and (\ref{irr-gf-rmax-lmin}), and using the observations in Remark~\ref{reduction-cases-rem} on reducibility of permutations under reverse and complement, and Theorem~\ref{gf-transformation}, we complete the proof of Theorem~\ref{dist-rmax-lmin-thm}. Note that in Theorem~\ref{dist-rmax-lmin-thm}, by (\ref{useful-formula}), $S(t,u)I(t,y)=I(t,u)S(t,y)$. 

The initial terms of $S(t,y,u)$ are
\begin{footnotesize}
\begin{eqnarray}
&& t ( uy ) + t^2 ( u^2y^2 + uy ) + t^3 ( u^3y^3 + 2u^2y^2 + u^2y + uy^2 + uy ) + \notag \\
&& t^4 ( u^4y^4 + 3u^3y^3 + 2u^3y^2 + 2u^2y^3 + u^3y + 4u^2y^2 + uy^3 + 3u^2y + 3uy^2 + 2uy )+ \cdots. \notag
%t^5 ( u^5y^5 + 4u^4y^4 + 3u^4y^3 + 3u^3y^4 + 2u^4y^2 + 8u^3y^3 + 2u^2y^4 + u^4y + 9u^3y^2 + 9u^2y^3 + uy^4 + 6u^3y + 11u^2y^2 + 6uy^3 + 9u^2y + 9uy^2 + 6uy )
\end{eqnarray}
\end{footnotesize}
The initial terms of $I(t,y,u)$ are
\begin{footnotesize}
\begin{eqnarray}
&& t ( uy ) + t^2 ( u^2y^2 ) + t^3 ( u^3y^3 + 2u^2y^2 ) + t^4 ( u^4y^4 + 3u^3y^3 + 2u^3y^2 + 2u^2y^3 + 3u^2y^2 ) + \notag \\
&& t^5 ( u^5y^5 + 4u^4y^4 + 3u^4y^3 + 3u^3y^4 + 2u^4y^2 + 8u^3y^3 + 2u^2y^4 + 8u^3y^2 + 8u^2y^3 + 6u^2y^2 )+\cdots. \notag 
\end{eqnarray}
\end{footnotesize}

\subsection{Joint distribution of $\lmax$, $\rmax$ and $\lmin$}
 By Remark~\ref{reduction-cases-rem}, to prove Theorem~\ref{dist-triple-thm} it is sufficient to prove it for the g.f.
$$S(t,x,y,u)=\sum_{\pi}t^{|\pi|}x^{\lmax(\pi)}y^{\rmax(\pi)}u^{\lmin(\pi)}$$
where the sum is taken over all separable permutations, and for the respective g.f. $I(t,x,y,u)$ for irreducible separable permutations.  

Setting $p=q=v=1$ in (\ref{gf-all-stat-eqn}) and (\ref{gf-irr-stat-eqn}) we obtain
\begin{eqnarray}\label{gf-lmax-rmax-lmin}  S(t,x,y,u) &=& xyut +  S(t,x,u)I(t,x,y) +\\ 
& & (S(t,x,y,u)-I(t,x,y,u)+xyut)S(t,y,u); \notag\end{eqnarray}
\begin{equation}\label{irr-gf-lmax-rmax-lmin} I(t,x,y,u) = xyut + (S(t,x,y,u)-I(t,x,y,u)+xyut)S(t,y,u).\end{equation}

The system of equations (\ref{gf-lmax-rmax-lmin}) and (\ref{irr-gf-lmax-rmax-lmin}) is equivalent to the system
\begin{eqnarray}\label{gf-lmax-rmax-lmin-1} && S(t,x,y,u)= S(t,x,u)I(t,x,y) + I(t,x,y,u) ; \end{eqnarray}
\begin{equation}\label{irr-gf-lmax-rmax-lmin-1} I(t,x,y,u) =\frac{xyut+(S(t,x,y,u)+xyut)S(t,y,u)}{S(t,y,u)+1}.\end{equation}

\begin{table}
\begin{center}
\begin{tabular}{|c|c|c|c|c|c|c|c|c|}
\hline
\diagbox{$n$}{$k$}&  1  & 2  & 3 &  4 & 5 & 6 & 7 & 8\\
\hline
1&  1 & & & & & & & \\
\hline
2&  1 & 1 & & & & & & \\
\hline
3& 2 & 3 & 1& & & & & \\
\hline
4 & 6  & 9 & 6 & 1 & & & & \\
\hline
5&  22 & 31  & 26 & 10 & 1 & &  &\\
\hline
6& 90 & 120 & 108 & 60 & 15 & 1 & & \\
\hline
7&  394 & 504 & 461  & 305 & 120 & 21 & 1 & \\
\hline
8 & 1806  & 2240 & 2046 & 1475 & 745 & 217 & 28 & 1\\
\hline
\end{tabular}
  \caption{Distribution of $\stat\in \{\lmax,\rmax, \lmin, \rmin\}$, where $n$ is the length of permutations and  $k$ is the number of occurrences of $\stat$.}
\label{tab-rmax-sep}
\end{center}
\end{table}

Observe that from \eqref{gf-lmax-rmax-lmin-1}, the distribution of the statistics over reducible permutations is $S(t,x,u)I(t,x,y)$. Solving simultaneously the system of equations (\ref{gf-lmax-rmax-lmin-1}) and (\ref{irr-gf-lmax-rmax-lmin-1}), which involves using the formulas $S(t,x,u)=\frac{(S(t,x)+1)(S(t,u)+1)txu}{1-S(t,x)S(t,u)}$ and $I(t,x,y)=\frac{(S(t,y)+1)txy}{1-S(t,x)S(t,y)}$ from Theorem~\ref{dist-lmax-rmax-thm} and the formula $S(t,y,u)=\frac{S(t,y)I(t,u)+tyu}{1-S(t,y)I(t,u)-tyu}$ from Theorem~\ref{dist-rmax-lmin-thm}, we complete the proof of Theorem~\ref{dist-triple-thm}. 

The initial terms of $S(t,x,y,u)$ are
\begin{footnotesize}
\begin{eqnarray}
&& t ( uxy ) +  t^2 ( u^2xy^2 + ux^2y ) + \notag \\
&& t^3 ( u^3xy^3 + u^2x^2y^2 + u^2x^2y + ux^3y + u^2xy^2 + ux^2y^2 ) + \notag \\
&& t^4 ( u^4xy^4 + u^3x^2y^3 + u^3x^2y^2 + u^2x^3y^2 + 2u^3xy^3 + u^2x^2y^3 + u^3x^2y + 2u^2x^3y + ux^4y +\notag \\
&&  u^3xy^2 + 2u^2x^2y^2 + 2ux^3y^2 + u^2xy^3 + ux^2y^3 + u^2x^2y + ux^3y + u^2xy^2 + ux^2y^2 ) + \cdots. \notag
%t^5 ( u^5xy^5 + u^4x^2y^4 + u^4x^2y^3 + u^3x^3y^3 + 3u^4xy^4 + u^3x^2y^4 + u^4x^2y^2 + 2u^3x^3y^2 + u^2x^4y^2 + 2u^4xy^3 + 3u^3x^2y^3 + 2u^2x^3y^3 + 2u^3xy^4 + u^2x^2y^4 + u^4x^2y + 3u^3x^3y + 3u^2x^4y + ux^5y + u^4xy^2 + 4u^3x^2y^2 + 5u^2x^3y^2 + 3ux^4y^2 + 4u^3xy^3 + 4u^2x^2y^3 + 3ux^3y^3 + u^2xy^4 + ux^2y^4 + 3u^3x^2y + 4u^2x^3y + 3ux^4y + 3u^3xy^2 + 3u^2x^2y^2 + 4ux^3y^2 + 3u^2xy^3 + 3ux^2y^3 + 2u^2x^2y + 2ux^3y + 2u^2xy^2 + 2ux^2y^2 )
\end{eqnarray}
\end{footnotesize}
The initial terms of $I(t,x,y,u)$ are
\begin{footnotesize}
\begin{eqnarray}
&& t ( uxy ) + t^2 ( u^2xy^2 ) + t^3 ( u^3xy^3 + u^2x^2y^2 + u^2xy^2 ) +  t^4 ( u^4xy^4 + u^3x^2y^3 + \notag \\
&& u^3x^2y^2 +u^2x^3y^2 + 2u^3xy^3 + u^2x^2y^3 + u^3xy^2 + u^2x^2y^2 + u^2xy^3 + u^2xy^2 ) + \cdots. \notag
%t^5 ( u^5xy^5 + u^4x^2y^4 + u^4x^2y^3 + u^3x^3y^3 + 3u^4xy^4 + u^3x^2y^4 + u^4x^2y^2 + 2u^3x^3y^2 + u^2x^4y^2 + 2u^4xy^3 + 3u^3x^2y^3 + 2u^2x^3y^3 + 2u^3xy^4 + u^2x^2y^4 + u^4xy^2 + 3u^3x^2y^2 + 2u^2x^3y^2 + 4u^3xy^3 + 3u^2x^2y^3 + u^2xy^4 + 3u^3xy^2 + u^2x^2y^2 + 3u^2xy^3 + 2u^2xy^2 )
\end{eqnarray}
\end{footnotesize}

\subsection{Joint distribution of $\lmax$, $\rmax$, $\lmin$ and $\rmin$}

Setting $p=q=1$ in \eqref{gf-all-stat-eqn} and \eqref{gf-irr-stat-eqn}, we obtain
\begin{eqnarray}\label{gf-all-stat-eqn-but-two} S(t,x,y,u,v) &=&  xyuvt +  S(t,x,u,v)I(t,x,y,v)+ \\ \nonumber
& & (S(t,x,y,u)-I(t,x,y,u)+xyut)S(t,y,u,v);\end{eqnarray}

\vspace{-1cm}

\begin{eqnarray}\label{gf-irr-stat-eqn-but-two} I(t,x,y,u,v) &=&  xyuvt + \notag \\ 
&& (S(t,x,y,u)-I(t,x,y,u)+xyut)S(t,y,u,v).
\end{eqnarray}

Subtracting from (\ref{gf-all-stat-eqn-but-two}) the equation (\ref{gf-irr-stat-eqn-but-two}),  we obtain 
$$S(t,x,y,u,v)=I(t,x,y,u,v)+S(t,x,u,v)I(t,x,y,v)$$
and the statements in Theorem~\ref{dist-general-xyuv-thm} follow after observing that $S(t,x,y,u)-I(t,x,y,u)$ in \eqref{gf-irr-stat-eqn-but-two} records the joint distribution of the statistics $\lmax$, $\rmax$ and $\lmin$ on reducible separable permutations and hence is given by \eqref{reducible-three-stat}.

The initial terms of $S(t,x,y,u,v)$ are
\begin{footnotesize}
\begin{eqnarray}
&& t ( uvxy ) +
t^2 ( uv^2x^2y + u^2vxy^2 ) + \notag \\
&& t^3 ( uv^3x^3y + u^3vxy^3 + u^2v^2x^2y + u^2v^2xy^2 + u^2vx^2y^2 + uv^2x^2y^2 ) + \notag \\
&& t^4 ( uv^4x^4y + u^4vxy^4 + u^2v^3x^3y + uv^3x^3y^2 + u^3v^2xy^3 + u^3vx^2y^3 + u^3v^2x^2y + \notag \\ 
&& u^2v^3x^2y + u^2v^2x^3y + uv^3x^3y + u^3v^2xy^2 + u^2v^3xy^2 + u^3vx^2y^2 + 2u^2v^2x^2y^2 + \notag \\  
&& uv^3x^2y^2 + u^2vx^3y^2 + uv^2x^3y^2 + u^3vxy^3 + u^2v^2xy^3 + u^2vx^2y^3 + uv^2x^2y^3 ) + \cdots. \notag
%t^5 ( uv^5x^5y + u^5vxy^5 + u^2v^4x^4y + uv^4x^4y^2 + u^4v^2xy^4 + u^4vx^2y^4 + u^3v^3x^3y + u^2v^4x^3y + u^2v^3x^4y + 2uv^4x^4y + u^2v^3x^3y^2 + uv^4x^3y^2 + uv^3x^4y^2 + u^4v^2xy^3 + u^3v^3xy^3 + u^4vx^2y^3 + u^3v^2x^2y^3 + u^3vx^3y^3 + uv^3x^3y^3 + 2u^4vxy^4 + u^3v^2xy^4 + u^3vx^2y^4 + u^4v^2x^2y + 2u^3v^3x^2y + u^2v^4x^2y + 2u^3v^2x^3y + 2u^2v^3x^3y + uv^4x^3y + u^2v^2x^4y + uv^3x^4y + u^4v^2xy^2 + 2u^3v^3xy^2 + u^2v^4xy^2 + u^4vx^2y^2 + 3u^3v^2x^2y^2 + 3u^2v^3x^2y^2 + uv^4x^2y^2 + 2u^3vx^3y^2 + 3u^2v^2x^3y^2 + 2uv^3x^3y^2 + u^2vx^4y^2 + uv^2x^4y^2 + u^4vxy^3 + 2u^3v^2xy^3 + 2u^2v^3xy^3 + 2u^3vx^2y^3 + 3u^2v^2x^2y^3 + 2uv^3x^2y^3 + 2u^2vx^3y^3 + 2uv^2x^3y^3 + u^3vxy^4 + u^2v^2xy^4 + u^2vx^2y^4 + uv^2x^2y^4 + u^3v^2x^2y + u^2v^3x^2y + u^2v^2x^3y + uv^3x^3y + u^3v^2xy^2 + u^2v^3xy^2 + u^3vx^2y^2 + uv^3x^2y^2 + u^2vx^3y^2 + uv^2x^3y^2 + u^3vxy^3 + u^2v^2xy^3 + u^2vx^2y^3 + uv^2x^2y^3 )
\end{eqnarray}
\end{footnotesize}
The initial terms of $I(t,x,y,u,v)$ are
\begin{footnotesize}
\begin{eqnarray}
&& t ( uvxy ) +
t^2 (u^2vxy^2) + t^3 ( u^3vxy^3 + u^2v^2xy^2 + u^2vx^2y^2 ) + \notag \\
&& t^4 ( u^4vxy^4 + u^3v^2xy^3 + u^3vx^2y^3 + u^3v^2xy^2 + u^2v^3xy^2 + u^3vx^2y^2 + u^2v^2x^2y^2 + \notag \\
&& u^2vx^3y^2 + u^3vxy^3 + u^2v^2xy^3 + u^2vx^2y^3 ) +\cdots. \notag
%t^5 ( u^5vxy^5 + u^4v^2xy^4 + u^4vx^2y^4 + u^4v^2xy^3 + u^3v^3xy^3 + u^4vx^2y^3 + u^3v^2x^2y^3 + u^3vx^3y^3 + 2u^4vxy^4 + u^3v^2xy^4 + u^3vx^2y^4 + u^4v^2xy^2 + 2u^3v^3xy^2 + u^2v^4xy^2 + u^4vx^2y^2 + 2u^3v^2x^2y^2 + u^2v^3x^2y^2 + 2u^3vx^3y^2 + u^2v^2x^3y^2 + u^2vx^4y^2 + u^4vxy^3 + 2u^3v^2xy^3 + 2u^2v^3xy^3 + 2u^3vx^2y^3 + 2u^2v^2x^2y^3 + 2u^2vx^3y^3 + u^3vxy^4 + u^2v^2xy^4 + u^2vx^2y^4 + u^3v^2xy^2 + u^2v^3xy^2 + u^3vx^2y^2 + u^2vx^3y^2 + u^3vxy^3 + u^2v^2xy^3 + u^2vx^2y^3 )
\end{eqnarray}
\end{footnotesize}

\section{Unimodality conjectures}\label{final-sec}

\begin{table}
\begin{center}
\begin{tabular}{|c|c|c|c|c|c|c|c|c|}
\hline
\diagbox{$n$}{$k$}&  1  & 2  & 3 &  4 & 5 & 6 & 7 & 8\\
\hline
1&  1 & & & & & & & \\
\hline
2&  0 & 1 & & & & & & \\
\hline
3& 0 & 2 & 1 & & & & & \\
\hline
4 & 0  & 5 & 5 & 1 & & & & \\
\hline
5&  0 & 16  & 19 & 9 & 1 & & & \\
\hline
6& 0  & 60 & 73 & 49 & 14 & 1 & & \\
\hline
7&  0 & 248 & 298  & 232 & 104 & 20 & 1 & \\
\hline
8 & 0  & 1092 & 1288 & 1069 & 607 & 195 & 27 & 1 \\
\hline
\end{tabular}
  \caption{Distribution of $\rmax$ or $\lmin$ (resp.,  $\lmax$ or $\rmin$) on {\em irreducible} (resp., {\em reducible}) separable permutations, where $n$ is the length of permutations and  $k$ is the number of occurrences of the statistic.}
\label{tab-rmax-irr-sep}
\end{center}
\end{table}

We conclude our paper with unimodality conjectures based on the computational results in Tables~\ref{tab-rmax-sep},~\ref{tab-rmax-irr-sep} and~\ref{tab-rmin-irr-sep}. That would be interesting to find combinatorial proofs of the conjectures based on the appropriate embeddings, for example, of separable permutations with $k+1$ right-to-left maxima into separable permutations of the same length with $k$ right-to-left maxima for $k\geq 2$ and separable permutations ending with the largest element into separable permutations of the same length with two right-to-left maxima.

\begin{conj}\label{unimod-conj-rmax-sep} The distribution of separable permutations $\pi\in S_n$ with $\stat(\pi)=k$, $\stat\in\{\lmax,\rmax, \lmin, \rmin\}$, is unimodal with the peak being at $k=2$ for $n\geq 3$ (see Table~\ref{tab-rmax-sep}).\end{conj}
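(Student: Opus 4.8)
The plan is to reduce to a single statistic and then prove two monotonicity inequalities by an injective argument built on the recursive structure of separable permutations. By Lemma~\ref{equid-lemma} the statistics $\lmax,\rmax,\lmin,\rmin$ share one distribution on separable permutations, namely the one tabulated in Table~\ref{tab-rmax-sep}, so it suffices to treat $\rmax$. Write $a_{n,k}$ for the number of separable $\pi\in S_n$ with $\rmax(\pi)=k$. Unimodality with peak at $k=2$ for $n\ge 3$ is equivalent to the two statements $a_{n,1}\le a_{n,2}$ and $a_{n,k}\ge a_{n,k+1}$ for all $k\ge 2$. I first note that the standard analytic route is unlikely to work: since the peak stays at $k=2$ while the support $\{1,\dots,n\}$ grows, the row polynomials $P_n(z)=\sum_k a_{n,k}z^k$ are strongly skewed and (for large $n$) not real-rooted, so real-rootedness and log-concavity machinery do not apply. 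A direct, preferably bijective, argument is needed, as indicated in the discussion preceding the conjecture.

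The engine of the argument is the behavior of $\rmax$ under the two building operations. Directly from the definitions, $\rmax(\alpha\oplus\beta)=\rmax(\beta)$ and $\rmax(\alpha\ominus\beta)=\rmax(\alpha)+\rmax(\beta)$. Consequently, in the unique decomposition $\pi=\pi^{(1)}\ominus\cdots\ominus\pi^{(r)}$ of a separable permutation into skew-indecomposable (equivalently, sum-decomposable or singleton) separable blocks, one has $\rmax(\pi)=\sum_{i=1}^{r}\rmax(\pi^{(i)})$; equivalently, in the notation of Figure~\ref{sepStructure}, $\rmax(\pi)=1+\sum_i\rmax(R_i)$. These identities let me move one unit of $\rmax$ at a time while preserving both length and separability.

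For the decreasing inequalities I would construct, for each $k\ge 2$, an injection from the set of separable $\pi\in S_n$ with $\rmax(\pi)=k+1$ into the set with $\rmax(\pi)=k$. The basic move replaces an adjacent pair of skew blocks $\alpha\ominus\beta$ by the single block $\alpha\oplus\beta$; by the identities above this lowers $\rmax$ by exactly $\rmax(\alpha)$, so I apply it to a block with $\rmax(\alpha)=1$. If all $r=k+1\ge 3$ skew blocks have $\rmax=1$, merging the first two does the job; if some block has $\rmax\ge 2$, that block is sum-decomposable and only its last summand carries its $\rmax$, so I recurse into that summand. This yields a recursively defined, length- and separability-preserving map that lowers $\rmax$ by exactly one.

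For the single rising inequality $a_{n,1}\le a_{n,2}$, note that the permutations counted by $a_{n,1}$ are precisely those ending in $n$, i.e.\ of the form $\sigma\oplus 1$ with $\sigma$ separable of length $n-1$; thus $a_{n,1}$ is the number of separable permutations of length $n-1$, with column generating function $t\,(1+S(t))$. I would inject these into the $\rmax=2$ class --- described via Figure~\ref{sepStructure} as the permutations with exactly one nonempty $R_i$, that block having $\rmax=1$ --- by relocating the inserted maximum and repackaging $\sigma$ accordingly; alternatively one can try to prove $a_{n,2}\ge a_{n,1}$ coefficientwise by extracting $[z^2]S(t,z)$ from the system \eqref{gf-rmax}--\eqref{irr-gf-rmax} and checking nonnegativity of $[z^2]S(t,z)-t\,(1+S(t))$. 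The main obstacle in both parts is not finding a local move that changes $\rmax$ by one, but certifying \emph{injectivity}: a sum-decomposable block can be split as $\alpha\oplus\beta$ in several ways and may also occur without being the result of a merge, so a canonical choice of block (and of splitting point) must be fixed and shown to be invertible across all structural cases. Making this canonical rule uniform in $k$ and $n$ --- or, in the analytic alternative, proving coefficientwise positivity directly from the nested-radical formula of Theorem~\ref{dist-rmax-thm} --- is where the real work lies.
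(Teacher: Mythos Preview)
The statement you are addressing is a \emph{conjecture}; the paper supplies no proof of it, only the sentence preceding the conjectures suggesting that ``combinatorial proofs \dots\ based on the appropriate embeddings'' would be of interest. So there is no paper argument to compare your proposal against.

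As a candidate proof your write-up is, as you yourself say, an outline with its key step missing. The identities $\rmax(\alpha\oplus\beta)=\rmax(\beta)$ and $\rmax(\alpha\ominus\beta)=\rmax(\alpha)+\rmax(\beta)$ are correct, the reduction to a single statistic via Lemma~\ref{equid-lemma} is correct, and the identification $a_{n,1}=|S_{n-1}(2413,3142)|$ with generating function $t(1+S(t))$ is correct. The proposed ``merge two skew blocks into a direct sum'' move indeed drops $\rmax$ by one when the upper block has $\rmax=1$. But the recursive rule you describe (``if some skew block has $\rmax\ge 2$, recurse into its last summand'') is not yet a map: you have not specified which block to recurse into when several have $\rmax\ge 2$, nor shown that images arising from a top-level merge cannot coincide with images arising from a merge deep inside a block. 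For example, after a top-level merge the new first skew block is $\pi^{(1)}\oplus\pi^{(2)}$, which may itself look exactly like a block that was already present and untouched in some other permutation with $\rmax=k$; distinguishing these is the whole problem, and nothing in the proposal handles it. The rising inequality $a_{n,1}\le a_{n,2}$ is left at the level of ``I would inject'' or ``one can try to prove coefficientwise positivity,'' with no construction or computation carried out. In short, what you have is a restatement of the conjecture in the language of local moves together with an honest acknowledgement that injectivity --- the only nontrivial content --- remains to be established; this is in line with the paper's own framing of the problem as open.
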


\begin{conj}\label{conj-2} The distribution of $\rmax$ or $\lmin$ (resp.,  $\lmax$ or $\rmin$) on {\em irreducible} (resp., {\em reducible}) separable permutations is unimodal with the peak being at $k=3$ for $n\geq 5$ (see Table~\ref{tab-rmax-irr-sep}).\end{conj}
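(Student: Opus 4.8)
The plan is to reduce all four cases in the statement to a single generating function and then prove a sharpened, three-part form of unimodality. By Table~\ref{tab-dist-rmax-thm}, the distribution of $\rmax$ or $\lmin$ on irreducible separable permutations and the distribution of $\lmax$ or $\rmin$ on reducible separable permutations are all recorded by the one generating function $F(t,z)=\frac{S^2(t,z)}{1+S(t,z)}+zt$, with $S(t,z)$ as in Theorem~\ref{dist-rmax-thm}. Writing $a_{n,k}=[t^nz^k]F(t,z)$, it therefore suffices to prove that for every $n\ge 5$ the sequence $(a_{n,k})_k$ attains its maximum uniquely at $k=3$. I would split this into: (I) $(a_{n,k})_{k\ge 2}$ is log-concave (note $a_{n,1}=0$ for $n\ge 2$, since an irreducible permutation of length $\ge 2$ never ends in its largest element); (II) $a_{n,2}<a_{n,3}$; and (III) $a_{n,3}>a_{n,4}$. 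Because a positive log-concave sequence has non-increasing successive ratios $a_{n,k+1}/a_{n,k}$, the inequalities (II) and (III) then force these ratios to exceed $1$ exactly up to $k=2$ and to drop below $1$ from $k=3$ on, which is unimodality with the peak pinned at $k=3$.

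For the analytic input I would extract an explicit equation for $F$. From \eqref{gf-rmax}--\eqref{irr-gf-rmax} one obtains the quadratic $I(t)S^2+(I(t)+zt-1)S+zt=0$ for $S=S(t,z)$, where $I(t)=I(t,1)$ satisfies $2I^2-(1+t)I+t=0$ by \eqref{irr-sep-perms-gf}. Setting $G=F-zt=\frac{S^2}{1+S}$ and eliminating $S$ (using $S^2=G(1+S)$) gives $N^2=G(I-1)(N+I-1)$ with $N=IG+zt$; eliminating $I$ then yields a single polynomial equation $P(F,z,t)=0$ over $\mathbb{Q}$. Clearing denominators and comparing coefficients of $t^nz^k$ produces a bivariate recurrence for $a_{n,k}$, which is the engine for proving (I)--(III) by induction on $n$, carrying a strengthened hypothesis that controls both log-concavity and the left-end ratios $a_{n,3}/a_{n,2}$ and $a_{n,4}/a_{n,3}$.

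A combinatorial route runs in parallel and may be cleaner for (I). Every irreducible separable permutation of length $\ge 2$ is a skew sum $\pi=A\ominus B$, and $\rmax(A\ominus B)=\rmax(A)+\rmax(B)$; fixing $A$ to be the first skew-indecomposable component makes the decomposition unique with $B$ an arbitrary (possibly empty) separable permutation. Thus $A_n(z)=\sum_k a_{n,k}z^k$ is a convolution-type combination of the $\rmax$-polynomials of separable permutations, and the additivity of $\rmax$ under $\ominus$ is exactly what one wants to feed into log-concavity-preserving operations; the delicate point is that sums of log-concave sequences need not be log-concave, so the combination must be organised carefully. For the location inequalities I would build the explicit injections hinted at before the conjectures: from the $2$-rmax class into the $3$-rmax class for (II), and from the $(k{+}1)$-rmax class into the $k$-rmax class for $k\ge 3$ for the decreasing side, for instance by coalescing the two topmost $\ominus$-summands and using the extra freedom available once $k\ge 3$ to encode the lost data injectively.

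The main obstacle is that the peak sits at the constant value $k=3$ while the support $\{2,\dots,n\}$ grows, so the distribution is strongly right-skewed; in particular $A_n(z)$ cannot be real-rooted for large $n$ (real-rootedness would pull the mode toward the mean, which increases with $n$), so interlacing and real-rootedness techniques do not apply and the argument must exploit the specific recurrence or an explicit injection. Consequently the fragile steps are (II) and (III): one must show that the threshold between the increasing and decreasing regimes falls exactly at $3$ rather than at a nearby value, which is where a generic log-concavity argument says nothing. I would anchor the induction with the base cases $n=5,6,7$ read off Table~\ref{tab-rmax-irr-sep} and expect the crux to be closing the inductive step for (II).
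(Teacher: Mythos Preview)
The paper does not prove this statement: it is stated as Conjecture~\ref{conj-2} and left open, with the authors explicitly inviting combinatorial proofs via injections in the paragraph preceding the conjectures. There is therefore no proof in the paper to compare your proposal against.

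Your proposal is a strategy outline rather than a proof. You correctly reduce all four cases to the single generating function $F(t,z)$ from Table~\ref{tab-dist-rmax-thm} and correctly observe that $a_{n,1}=0$ for $n\ge 2$. But each of the three ingredients you list is left as a program rather than carried out. For (I) you yourself note that the $\ominus$-convolution structure does not directly give log-concavity because sums of log-concave sequences need not be log-concave, and you do not say how the combination is to be ``organised carefully''. For (II) and (III) you sketch possible injections but construct none, and you explicitly flag these as the fragile steps where a generic log-concavity argument says nothing. The elimination to a polynomial equation $P(F,z,t)=0$ and the resulting bivariate recurrence are plausible tools, but extracting log-concavity from such a recurrence is typically the crux rather than a routine induction, and you give no concrete strengthened hypothesis that would close the inductive step. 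Your observation that real-rootedness cannot help (the peak is fixed at $k=3$ while the mean grows) is a genuine insight into why the problem is delicate, but it is a diagnosis, not a cure. In short, the proposal maps out reasonable lines of attack but does not complete any of them; the statement remains a conjecture after reading it.
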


\begin{conj}\label{conj-3} The distribution of $\rmax$ or $\lmin$ (resp., $\lmax$ or $\rmin$) on {\em reducible} (resp., {\em irreducible}) separable permutations is unimodal with the peak being at $k=1$ for $n\geq 1$ (see Table~\ref{tab-rmin-irr-sep}).\end{conj}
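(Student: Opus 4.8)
The plan is to prove Conjecture~\ref{conj-3} in an equivalent, purely combinatorial form. By Lemma~\ref{equid-lemma} it suffices to settle a single representative, and I would take $\lmax$ on irreducible separable permutations; write $b_{n,k}$ for the number of such permutations of length $n$ with $\lmax=k$. Since the peak is asserted at the \emph{first} index $k=1$, unimodality here means literally that every row is non-increasing,
\[
b_{n,1}\ge b_{n,2}\ge\cdots\ge b_{n,n}\qquad(n\ge1),
\]
so the entire content is the monotonicity $b_{n,k}\ge b_{n,k+1}$. The case $n=1$ is trivial ($b_{1,1}=1$), so I would assume $n\ge2$ throughout.

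The engine would be two complementary descriptions of $B(t,z):=\sum_{n,k}b_{n,k}t^nz^k$. First, because $\lmax$ is additive over the decomposition of a separable permutation into its irreducible components, the full separable generating function $S(t,z)$ of Theorem~\ref{dist-rmax-thm} factors as $S=\frac{B}{1-B}$, equivalently $B=\frac{S}{1+S}$, which is the generating function recorded in Table~\ref{tab-dist-rmax-thm}. Second, dividing equation~\eqref{gf-rmax} by $S(t,z)+1$ yields the convolution form $B=zt+I(t)S(t,z)$, that is
\[
b_{n,k}=\sum_{m=1}^{n-1}i_m\,s_{n-m,k}\qquad(n\ge2),
\]
where $i_m=[t^m]I(t)$ counts irreducible separable permutations of length $m$ and $s_{j,k}=[t^jz^k]S(t,z)$. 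From here I would attempt to show $d_{n,k}:=b_{n,k}-b_{n,k+1}\ge0$ by induction on $n$, writing $d_{n,k}=\sum_{m=1}^{n-1}i_m\,(s_{n-m,k}-s_{n-m,k+1})$ and seeking a manifestly nonnegative reorganization. The alternative, combinatorial route is to construct an explicit injection from the set counted by $b_{n,k+1}$ into the set counted by $b_{n,k}$, acting on the top $\ominus$-block of an irreducible permutation (whose $\oplus$-components carry all of $\lmax$, since $\lmax(\pi)$ equals $\lmax$ of that top block) so as to lower the statistic by exactly one while preserving both the length and $\oplus$-indecomposability.

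The hard part will be the sign problem in the convolution. The inner differences $s_{j,k}-s_{j,k+1}$ are \emph{not} of one sign: the separable row $(s_{j,k})_k$ is itself unimodal with peak at $k=2$ — this is Conjecture~\ref{unimod-conj-rmax-sep}, which is open — so $s_{j,1}-s_{j,2}<0$ while $s_{j,k}-s_{j,k+1}\ge0$ for $k\ge2$. Thus the non-increasing behaviour of $b_{n,k}$ is a genuine global cancellation, in which the Schröder-type weights $i_m$ together with the vanishing $s_{j,k}=0$ for $k>j$ must absorb the negative $k=1$ contributions; making this precise, either through a closed sign-definite recurrence for $d_{n,k}$ or a sign-reversing involution, is where the real difficulty lies, and it is plausibly entangled with a simultaneous resolution of Conjecture~\ref{unimod-conj-rmax-sep}. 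For the injective route the analogous obstacle is to guarantee that the block-modification map is globally injective and that its image stays both separable and irreducible, the constraint $L_1=\varepsilon$ being exactly what makes the construction delicate.
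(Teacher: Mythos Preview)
The statement you are addressing is labelled a \emph{conjecture} in the paper, and the paper offers no proof of it; it is presented as an open problem in Section~\ref{final-sec}, accompanied only by the numerical evidence of Table~\ref{tab-rmin-irr-sep}. There is therefore no ``paper's own proof'' to compare your proposal against.

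Your proposal, in turn, is not a proof either, and you say so yourself. The setup is sound: the reduction via Lemma~\ref{equid-lemma} to $\lmax$ on irreducible separable permutations is legitimate, the identification $B=S/(1+S)$ follows from the additivity of $\lmax$ over $\oplus$-components, and dividing \eqref{gf-rmax} by $S(t,z)+1$ does yield the convolution $b_{n,k}=\sum_{m=1}^{n-1} i_m\,s_{n-m,k}$ for $n\ge 2$. But from that point on you only describe two possible strategies (a sign-definite reorganisation of $d_{n,k}=\sum_m i_m(s_{n-m,k}-s_{n-m,k+1})$, or an explicit injection on the top $\ominus$-block) and then explicitly flag the obstruction: the inner differences $s_{j,1}-s_{j,2}$ are negative, so the nonnegativity of $d_{n,k}$ is a genuine cancellation phenomenon that you do not resolve. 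You even note that it may be entangled with Conjecture~\ref{unimod-conj-rmax-sep}. That is an honest diagnosis of where the difficulty lies, not a proof.

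In short, both the paper and your write-up leave this open. Your outline is a reasonable plan of attack and the convolution identity $B=zt+I(t)\,S(t,z)$ is a clean starting point, but nothing here closes the gap, and the paper does not claim to either.
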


\begin{table}
\begin{center}
\begin{tabular}{|c|c|c|c|c|c|c|c|}
\hline
\diagbox{$n$}{$k$}&  1  & 2  & 3 &  4 & 5 & 6 & 7 \\
\hline
1&  1 & & & & & &  \\
\hline
2&  1 &  & & & & &  \\
\hline
3& 2 & 1 & & & & &  \\
\hline
4 & 6 & 4 & 1 &  & & &  \\
\hline
5&  22 & 15  & 7 & 1 &  & &  \\
\hline
6& 90 & 60 & 35 & 11 & 1 &  &  \\
\hline
7&  394 & 256  & 163  &  73 & 16  & 1 &   \\
\hline
8 & 1806 & 1148 & 758 & 406 & 138  &  22 & 1  \\
\hline
\end{tabular}
  \caption{Distribution of $\rmax$ or $\lmin$ (resp., $\lmax$ or $\rmin$) on {\em reducible} (resp., {\em irreducible}) separable permutations, where $n$ is the length of permutations and  $k$ is the number of occurrences of the statistic.}
\label{tab-rmin-irr-sep}
\end{center}
\end{table}

\vskip 4mm
\noindent {\bf Acknowledgments.}
The second author acknowledges the SUSTech International Center for Mathematics and extends special thanks to its director, Professor Efim Zelmanov, for their hospitality during the author's visit to the Center in November 2023. The work of the third author was supported by the National Science Foundation of China (No.\ 12171362).

\end{document}